\numberwithin{equation}{section}
\theoremstyle{plain}
\newtheorem{thm}{Theorem}[section]
\newtheorem{prop}[thm]{Proposition}
\newtheorem{cor}[thm]{Corollary}
\newtheorem{lemma}[thm]{Lemma}
\theoremstyle{definition}
\def\Rn{{\mathbb R}^n}
\def\X{\mathbb X}
\newcommand{\RomanNumeralCaps}[1]
    {\MakeUppercase{\romannumeral #1}}
\begin{document}
\title[Hardy inequalities on metric measure spaces, \RomanNumeralCaps{2}: 
The case $p>q$]
{Hardy inequalities on metric measure spaces, \RomanNumeralCaps{2}:
 The case $p>q$}

\author[Michael Ruzhansky]{Michael Ruzhansky}
\address{
  Michael Ruzhansky:
    \endgraf
    Department of Mathematics: Analysis, Logic and Discrete Mathematics 
        \endgraf
    Ghent University
      \endgraf
    Krijgslaan 281, Building S8 
      \endgraf
B 9000 Ghent, Belgium
      \endgraf
  and 
  \endgraf
    School of Mathematical Sciences
    \endgraf
  Queen Mary University of London
  \endgraf
  Mile End Road, London E1 4NS
    \endgraf
United Kingdom
  \endgraf
  {\it E-mail address} {\rm m.ruzhansky@qmul.ac.uk}
  }

\author[Daulti Verma]{Daulti Verma}
\address{
  Daulti Verma:
  \endgraf
    Miranda House College
  \endgraf
  University of Delhi
  \endgraf
  Delhi 110007
  \endgraf
  India
  \endgraf
   and
  \endgraf
    School of Mathematical Sciences
  \endgraf
  Queen Mary University of London
  \endgraf
  Mile End Road, London E1 4NS
  \endgraf
  United Kingdom
   \endgraf
  {\it E-mail address} {\rm daulti.verma@mirandahouse.ac.in}
 }
\date{\today}

\subjclass{26D10, 22E30.} \keywords{Hardy inequalities, metric measure spaces, homogeneous group, hyperbolic space, Riemannian manifolds with negative curvature}

\thanks{The first author was supported in parts 
by the FWO Odysseus 1 grant G.0H94.18N: Analysis and Partial Differential Equations, by the Methusalem programme of the Ghent University Special Research Fund (BOF) (Grant number 01M01021), and by the EPSRC grant
EP/R003025/2.}

\begin{abstract}
In this paper we continue our investigations giving the characterisation of weights for two-weight Hardy inequalities to hold on general metric measure spaces possessing polar decompositions. Since there may be no differentiable structure on such spaces, the inequalities are given in the integral form in the spirit of Hardy's original inequality.  This is a continuation of our paper \cite{RV-18} where we treated the case $p\leq q$. Here the remaining range $p>q$ is considered, namely, $0<q<p$, $1<p<\infty.$ We give several examples of the obtained results, finding conditions on the weights for integral Hardy inequalities on homogeneous groups, as well as on hyperbolic spaces and on more general Cartan-Hadamard manifolds. As in the first part of this paper, we do not need to impose doubling conditions on the metric.
\end{abstract}

\maketitle
\tableofcontents
\section{Introduction}
%\textcolor{red}{I\ am\ not\ able\ to\ firure\ out\ how\ to\ reduce\ space\ in\ title\ on\ the\ top\ of\ every\ page}

After the Hardy inequality was proved by G. H. Hardy in \cite{Hardy:1920}, a large amount of literature is available on this inequality. The integral inequality of the type
\begin{equation}\label{EQ:Hardy_M}
\left(\int_a^b \left(\int_a^x f(t) dt \right)^q u(x) dx \right)^{1/q} \leq C \left( \int_a^b f^p(x)v(x) dx \right)^{1/p},
\end{equation} 
is well-known with $a$ and $b$ real numbers satisfying $-\infty \leq a<b \leq \infty$, and $p, q$ real parameters satisfying $0<q \leq \infty$, $1 \leq p \leq \infty.$ The problem of characterising the weights $u$ and $v$ in this inequality has been investigated by many authors. There are too many references to give an entire overview here, so we refer to only a few: Hardy, Littlewood and Polya \cite{Hardy:1952}, Mitrinovic, Pecaric and Fink \cite{MPF91}, Heinig and Sinnamon \cite{Heinig}, Kokitishvili \cite{VMK},
%n \cite{Hardy:1920}, Hardy showed his famous inequality 
%\begin{equation}\label{EQ:Hardy0}
%\int_b^\infty \left(\frac{\int_b^x f(t) dt}{x}\right)^p dx \leq \left(\frac{p}{p-1}\right)^p \int_b^\infty f(x)^p dx,
%\end{equation} 
%where $p>1$, $b>0$, and $f\geq 0$ is a nonnegative function.
%Since then a lot of work has been done on Hardy inequalities in different forms and in different settings. It is clearly impossible to give a complete overview of the literature, so let us only refer to
%books and surveys by
 Opic and Kufner \cite{OK-book}, Davies \cite{Davies}, 
Kufner, Persson and Samko \cite{Hardy-weighted-book,KP-Samko},
Edmunds and Evans \cite{DE-book}, Mazya \cite{Mazya-Sobolev-spaces,Mazya-book}, Ghoussoub and Moradifam \cite{GM-book}, Balinsky, Evans and Lewis \cite{BEL-book}, and references therein.
We can also refer to the recent open access book
\cite{RS-book} devoted to Hardy, Rellich and other inequalities in the setting of nilpotent Lie groups.

%In this paper we show that the inequality \eqref{EQ:Hardy0} actually holds in a much more general setting, also with rather general pairs of weights. However, the weights have to satisfy certain compatibility conditions for such inequalities to hold true, and these conditions are {\em necessary and sufficient}. 
In our previous paper  \cite{RV-18}, for the case $1< p \leq q < \infty$, we characterised the weights $u$ and $v$ for the Hardy inequalities \eqref{EQ:Hardy_M} to hold on general metric measure spaces with polar decompositions.
%Several characterisations of weights for the two-weight Hardy inequalities to hold on general metric measure spaces for the case $1< p \leq q < \infty$   have been obtained in \cite{RV-18}. 
In this paper, complementary to \cite{RV-18}, we consider the weight characterisations for the case $$0<q<p, 1<p<\infty.$$    
   
  The setting of these papers is rather general, and we consider {\em polarisable metric measure spaces}. These are
  metric spaces $(\mathbb X,d)$ with a Borel measure $dx$ allowing for the following {\em polar decomposition} at $a\in{\mathbb X}$: we assume that there is a locally integrable function $\lambda \in L^1_{loc}$  such that for all $f\in L^1(\mathbb X)$ we have
   \begin{equation}\label{EQ:polar}
   \int_{\mathbb X}f(x)dx= \int_0^{\infty}\int_{\Sigma_r} f(r,\omega) \lambda(r,\omega) d{\omega_r} dr,
   \end{equation}
   where $(r,\omega)\rightarrow a $ as $r\rightarrow0$.
    Here the sets $\Sigma_r=\{x\in G:d(x,a)=r \}\subset \mathbb X$ are equipped with measures, which we denote by $d\omega_r$. 
    
The polar decomposition \eqref{EQ:polar} is a rather general condition in the sense that we allow the function $\lambda$ to be dependent on the full variable $x=(r,\omega)$. 
In the examples described below, in the presence of the differential structure, the function $\lambda(r,\omega)$ can appear naturally as the Jacobian of the polar change of variables. 
However, since we do not assume that $\X$ must have a differentiable structure, we impose
\eqref{EQ:polar} as a condition on metric and measure.

In our previous paper \cite{RV-18} we gave several important examples of polarisable metric spaces, let us briefly recapture them here:
\begin{itemize}
\item[(I)] On the Euclidean space $\Rn$, we can take $ \lambda (r,\omega)= {r}^{n-1}$, and more generally, we have \eqref{EQ:polar} on all 
homogeneous groups with $ \lambda (r,\omega)= {r}^{Q-1}$, where $Q$ is the homogeneous dimension of the group. We can also refer to Folland and Stein \cite{FS-Hardy} and to \cite{FR} for details of such groups.
\item[(II)] Hyperbolic spaces $\mathbb H^n$ with $\lambda(r,\omega)=(\sinh {r})^{n-1}$, or more general symmetric spaces of noncompact type.
\item[(III)] Cartan-Hadamard manifolds, that is, complete, simply connected Riemannian manifolds with non-positive sectional curvature. In this case $\lambda(\rho,\omega)$ depends on both variables $\rho$ and $\omega$. 
We refer to Section \ref{SEC:CH} for this example, and to \cite{RV-18} for more details on  $\lambda(\rho,\omega)$ in this case.
 \item[(IV)] Arbitrary complete Riemannian manifolds $M$: let $C(p)$ denote the cut locus of a point $p\in M$, which we may fix. 
 Let us denote by $M_{p}$ the tangent space to $M$ at $p$, and by $|\cdot|$ the Riemannian length. 
 We also denote $D_{p}:=M\backslash C(p)$ and $S(p;r):=\{x\in M_{p}:|x|=r\}$. Then for any integrable function $f$ on $M$ we have the polar decomposition
        \begin{equation}\label{pol_decom_manif}
        \int_{M}f dV=\int_{0}^{+\infty}dr\int_{r^{-1}S(p;r)\cap D_{p}}f(\exp r\xi)\sqrt{g}(r;\xi)d\mu_{p}(\xi)
        \end{equation}
        for some function $\sqrt{g}$ on $D_{p}$, where $r^{-1}S(p,r)\cap D_{p}$ is the subset of $S_{p}$ obtained by dividing each of the elements of $S(p,r)\cap D_{p}$ by $r$, and $S_{p}:=S(p;1)$. The measure $d\mu_{p}(\xi)$ is the Riemannian measure on $S_{p}$ induced by the Euclidean Lebesgue measure on $M_{p}$. We refer to \cite[Formula III.3.5, P.123]{Cha06}, \cite[Chapter 4]{Li12} and \cite[Chapter 1, Paragraph 12]{CLN06} for more information on this decomposition.
   \end{itemize} 

In this paper, as usual, we will write $ A\approx B$ to indicate  that the expressions $A$ and $B$ are equivalent.
%$A\lesssim B$ means that $A\leq cB$ with some $c>0$ and $A\gtrsim B$ means that $A\geq cB$ with some $c>0$.

The authors would like to thank Aidyn Kassymov and Bolys Sabitbek for checking some calculations in this paper.

\section{Main results}

Let $d$ is the metric on $\X$. 
We denote by $B(a,r)$ the corresponding balls with respect to $d$, centred at $a\in\X$ and having radius  $r$, namely, 
$$B(a,r):= \{x\in\mathbb X : d(x,a)<r\}.$$ 
To simplify the notation, for all arguments, we fix some point $a\in {\mathbb X}$, and then we will  denote  $${\vert x \vert}_a := d(a,x).$$

The main result of this paper is to characterise the weights $u$ and $v$ for which the corresponding Hardy inequality holds on $\X$. For $\X$=$\mathbb {R}$, such result has been proved by Sinnamon and Stepanov \cite{SS-96}. 
For an alternative approach to these estimates in the case $1<q<p<\infty$ we refer to \cite[Theorem 1.13]{RY18b}.
Now, we formulate one of our main results:
\begin{thm}\label{Theorem1} 
Suppose $0<q<p$, $1<p<\infty$ and $1/r=1/q-1/p$. Let $\mathbb X $ be a metric measure space with a polar decomposition at $a$. Let $u,v> 0$ be measurable and positive a.e in $\mathbb X$ such that $u\in L^1(\mathbb X\backslash \{a\})$ and $v^{1-p'}\in L^1_{loc}(\mathbb X)$. 

Then the inequality
\begin{equation}\label{EQ:Hardy1}
\bigg(\int_\mathbb X\bigg(\int_{B(a,\vert x \vert_a)}\vert f(y) \vert dy\bigg)^q u(x)dx\bigg)^\frac{1}{q}\le C\bigg\{\int_{\mathbb X} {\vert f(x) \vert}^pv(x)dx\bigg\}^{\frac1p}
\end{equation}

holds for all measurable functions $f:\X\to{\mathbb C}$ if and only if 
$$
\mathcal A_2:=  \bigg(\int_\mathbb X \bigg(\int_{\mathbb X\backslash{B(a,|x|_a )}} u(y) dy\bigg)^{r/p}  \bigg( \int_{B(a,|x|_a  )}v^{1-p'}(y)dy\bigg)^{r/p'}u(x)dx\bigg)^{1/r}<\infty.
$$
Moreover, the smallest constant $C$ for which \eqref{EQ:Hardy1} holds satisfies 
$$
(p')^{1/p'} q^{1/p} (1-q/p) \mathcal A_2 \le C \le (r/q)^{1/r} p^{1/p} p'^{1/p'} \mathcal A_2.
$$
\end{thm}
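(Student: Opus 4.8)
The plan is to collapse the multidimensional inequality \eqref{EQ:Hardy1} to a one-dimensional two-weight Hardy inequality on $(0,\infty)$ by means of the polar decomposition \eqref{EQ:polar}, and then to treat the one-dimensional statement in the range $0<q<p$ directly. For a fixed $f$, the inner integral $\int_{B(a,|x|_a)}|f|\,dy$ depends only on the radius $t:=|x|_a$, so I would set
\[
g(t):=\int_{\Sigma_t}|f(t,\omega)|\lambda(t,\omega)\,d\omega_t,\quad U(t):=\int_{\Sigma_t}u\,\lambda\,d\omega_t,\quad W(t):=\int_{\Sigma_t}v^{1-p'}\lambda\,d\omega_t .
\]
The polar decomposition turns the left-hand side of \eqref{EQ:Hardy1} into $\big(\int_0^\infty(\int_0^t g)^q U(t)\,dt\big)^{1/q}$. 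Applying Fubini together with the algebraic identity $(1-p)(1-p')=1$, the same computation rewrites $\mathcal A_2$ as the classical one-dimensional Maz'ya condition $\big(\int_0^\infty(\int_t^\infty U)^{r/p}(\int_0^t V^{1-p'})^{r/p'}U(t)\,dt\big)^{1/r}$ for the problem with right-hand weight $V:=W^{1-p}$: indeed $V^{1-p'}=W$, so $\int_0^t V^{1-p'}=\int_{B(a,t)}v^{1-p'}$ and $\int_t^\infty U=\int_{\mathbb X\setminus B(a,t)}u$. Thus the theorem is equivalent to the one-dimensional inequality $\|\int_0^{(\cdot)}g\|_{L^q(U\,dt)}\le C\|g\|_{L^p(V\,dt)}$, sharp constants included.

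The bridge between the two formulations is a pointwise Hölder optimisation on each sphere $\Sigma_t$. For a prescribed value $g(t)=\int_{\Sigma_t}|f|\lambda\,d\omega_t$, Hölder's inequality gives $\int_{\Sigma_t}|f|^p v\,\lambda\,d\omega_t\ge g(t)^p W(t)^{1-p}$, with equality for the minimiser $|f(t,\omega)|=g(t)\,v^{1-p'}(t,\omega)/W(t)$. Integrating in $t$ yields $\int_{\mathbb X}|f|^p v\ge\int_0^\infty g^p V$, while the minimiser realises any profile $g$ with equality. Consequently the sufficiency direction follows by applying the one-dimensional inequality to $g$ and bounding its right-hand side from above by $\int_{\mathbb X}|f|^p v$, and the necessity direction follows by testing \eqref{EQ:Hardy1} on the minimiser, which transfers the one-dimensional lower bound for $C$ verbatim to $\mathbb X$. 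In particular both constants in the theorem originate entirely from the one-dimensional estimate.

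It remains to establish the one-dimensional inequality for $0<q<p$, which is the technical core and the only place where $p>q$ is essential. For sufficiency I would start from the elementary pointwise bound $\int_0^t g\le\big(\int_0^t g^pV\big)^{1/p}H(t)^{1/p'}$, where $H(t):=\int_0^t V^{1-p'}$, raise it to the power $q$ and integrate against $U(t)\,dt$, reducing matters to controlling $\int_0^\infty\Psi(t)^{q/p}H(t)^{q/p'}U(t)\,dt$ with $\Psi(t):=\int_0^t g^pV$ nondecreasing and $\Psi(\infty)=\|g\|_{L^p(V)}^p$. Exploiting the monotonicity of $\Psi$ (writing $\Psi^{q/p}$ through its derivative and using Fubini) and then Hölder with the conjugate exponents $p/q$ and $r/q$, one bounds this quantity by $\mathcal A_2^{\,q}\|g\|_{L^p(V)}^{\,q}$, and careful bookkeeping produces the upper constant $(r/q)^{1/r}p^{1/p}p'^{1/p'}$. \emph{This is the main obstacle}: the coupling of the $g$-dependent increasing function $\Psi$ with the fixed weight integrals is precisely what forces the \emph{integral} condition $\mathcal A_2$ rather than the supremum (Muckenhoupt-type) condition governing the case $p\le q$ in \cite{RV-18}, and the conjugacy $1/r=1/q-1/p$ is exactly what makes the final Hölder step close; a naive estimate $\Psi\le\Psi(\infty)$ loses the correct homogeneity and fails. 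Here one may alternatively invoke the one-dimensional result of Sinnamon–Stepanov \cite{SS-96}, but a self-contained argument is needed to obtain the explicit constants.

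For necessity I would insert into the one-dimensional inequality an explicit test profile of the form $g(t)=V(t)^{1-p'}\,\Phi(t)^{\alpha}H(t)^{\beta}$ (suitably truncated to guarantee integrability), where $\Phi(t)=\int_t^\infty U$ and the exponents $\alpha,\beta$ are chosen so that, after using $V^{1-p'}\,dt=dH$ and $U\,dt=-d\Phi$, both sides reduce to explicit powers of $\mathcal A_2$. Comparing the two sides then yields the lower bound $(p')^{1/p'}q^{1/p}(1-q/p)\,\mathcal A_2\le C$. Throughout, the hypotheses $u\in L^1(\mathbb X\setminus\{a\})$ and $v^{1-p'}\in L^1_{loc}(\mathbb X)$ guarantee that $U,W,H,\Phi$ are finite and that the boundary terms arising in the integrations by parts vanish, so that all the manipulations above are justified.
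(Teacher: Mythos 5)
Your proposal is correct, and it takes a genuinely different route from the paper. The paper never isolates a one-dimensional theorem to quote; instead it re-runs the whole Sinnamon--Stepanov argument inside the metric setting: a metric comparison lemma for radially non-decreasing majorants (Lemma \ref{lemma2}), a metric H\"older-insertion estimate (Proposition \ref{proposition1}), a metric version of the auxiliary Hardy inequality with constant $p'$ (Proposition \ref{proposition2}), the identity $\mathcal A_2^r=(q/p')\mathcal A_1^r$ (Lemma \ref{lemma1}), a direct test-function argument for necessity, and a truncation argument ($u_n=u\chi_{B(a,n)}$, $w_n=\min(w,n)+\chi_{\mathbb X\backslash B(a,n)}$) to remove the auxiliary condition \eqref{EQ:Condn}. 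You instead prove an exact equivalence, best constant included, between \eqref{EQ:Hardy1} and the one-dimensional inequality for the spherical averages $g$, $U$, $V=W^{1-p}$: the key observations --- that the inner integral is radial, that $(1-p)(1-p')=1$ turns $\mathcal A_2$ into the one-dimensional Maz'ya constant, and that H\"older on each sphere $\Sigma_t$ has the explicit equality case $|f|=g\,v^{1-p'}/W$ realising any radial profile --- are all sound (one needs $0<W(t)<\infty$ a.e., which follows from positivity of $v$ and $v^{1-p'}\in L^1_{loc}$, null spheres being irrelevant). The constants then transfer verbatim from the case $\mathbb X=\mathbb R$, which is precisely the result of \cite{SS-96} that the paper itself cites with these constants. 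Your approach buys modularity and brevity, and makes transparent that the metric theorem carries no new analytic content beyond the one-dimensional one; the paper's approach buys a self-contained proof whose intermediate metric statements are reusable. It is worth noting that your spherical-H\"older bridge does appear in the paper, but only buried inside the proof of Proposition \ref{proposition2}, not as the organizing principle.

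The one soft spot is your self-contained sketch of the one-dimensional sufficiency, which as written does not close. After the pointwise bound $\int_0^t g\le\Psi(t)^{1/p}H(t)^{1/p'}$, splitting $U=U^{q/p}U^{q/r}$ and applying H\"older with exponents $p/q$ and $r/q$ produces the factors $\big(\int\Psi\,U\,dt\big)^{q/p}$ and $\big(\int H^{r/p'}U\,dt\big)^{q/r}$; the first equals $\big(\int g^pV\,\Phi\,dt\big)^{q/p}$, which is not controlled by $\|g\|_{L^p(V)}^q$, and the second is missing the factor $\Phi^{r/p}$ present in $\mathcal A_2^r$. Pulling the derivative of $\Psi^{q/p}$ out by Fubini first runs into the same decoupling problem (one ends with a factor $\int \Psi^{-1}\psi K^{r/q}$ that still depends on $g$). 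The missing mechanism is exactly the $g$-independent auxiliary weight of Proposition \ref{proposition1}: one multiplies and divides by $\big(\int_0^t \tilde U^{r/p}\tilde B^{-r/q}B_1\,d\rho\big)^{q/r}$ \emph{before} applying H\"older, and it is this insertion that makes the exponents $r/q$, $p/q$ close and yields the constant $(r/q)^{1/r}p^{1/p}(p')^{1/p'}$. Since you explicitly allow invoking \cite{SS-96} --- which does contain the two-sided bounds with the stated constants --- this does not invalidate your proposal, but the ``careful bookkeeping'' you defer is in fact the core difficulty of the theorem rather than a routine step.
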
 

Before proving the above theorem, we will need to prove several auxiliary facts.
Throughout this paper, we will use the following notations:
\begin{align}\label{EQ:U}
U(x)= { \int_{\mathbb X\backslash{B(a,|x|_a )}} u(y) dy}, 
\end{align} 
\begin{align} \label{EQ:V}
V(x)= \int_{B(a,|x|_a  )}v^{1-p'}(y)dy,
\end{align}
 \begin{align}\label{EQ:U_tilde}
  \tilde{U}(t)= \int_t^\infty\int_{\Sigma_\rho} \lambda(\rho,\sigma)u(\rho,\sigma) d\sigma_{\rho} d\rho,
 \end{align} 
 \begin{align}\label{EQ:V_tilde}
   \tilde{V}(t)= \int_0^t \int_{\Sigma_\rho} \lambda(\rho,\sigma)v^{1-p'}(\rho,\sigma) d\sigma_{\rho} d\rho,
 \end{align} 
 \begin{align}\label{EQ:U_1}
 U_1(\rho)=\int_{\Sigma_\rho} \lambda(\rho,\sigma)u(\rho,\sigma) d\sigma_{\rho},
 \end{align} 
 \begin{align}\label{EQ:V_1}
  V_1(\rho)=\int_{\Sigma_\rho} \lambda(\rho,\sigma)v^{1-p'}(\rho,\sigma) d\sigma_\rho.
 \end{align} 
 \begin{lemma}
 \label{lemma1}
 Let us denote
$$
\mathcal A_{1} := \bigg\{\int_{\mathbb X} U^{r/q}(x) V^{r/q'}(x)v^{1-p'}(x)dx \bigg\}^{1/r}.
$$
Then
 \begin{align}\label{EQ:Equi1}
 \mathcal A_2^r =(q/p')  \mathcal A_1^r.
  \end{align}
  \end{lemma}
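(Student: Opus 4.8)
The plan is to reduce both $\mathcal{A}_2$ and $\mathcal{A}_1$ to one-dimensional integrals in the radial variable using the polar decomposition \eqref{EQ:polar}, and then to pass from one to the other by a single integration by parts. The key observation is that the quantities $U$ and $V$ from \eqref{EQ:U}--\eqref{EQ:V} depend on $x$ only through $t:=|x|_a$: comparing with \eqref{EQ:U_tilde}--\eqref{EQ:V_tilde} one sees that $U(x)=\tilde U(t)$ and $V(x)=\tilde V(t)$ whenever $|x|_a=t$. Applying \eqref{EQ:polar} to the defining integrals and collapsing the inner integrals over $\Sigma_t$ to $U_1$ and $V_1$ via \eqref{EQ:U_1}--\eqref{EQ:V_1}, I would obtain
\[
\mathcal{A}_2^r=\int_0^\infty \tilde U(t)^{r/p}\,\tilde V(t)^{r/p'}\,U_1(t)\,dt,
\qquad
\mathcal{A}_1^r=\int_0^\infty \tilde U(t)^{r/q}\,\tilde V(t)^{r/q'}\,V_1(t)\,dt.
\]

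Next I would record the arithmetic of the exponents. From $1/r=1/q-1/p$ one gets $r/q-r/p=1$, hence $r/p+1=r/q$ and, since $1/q+1/q'=1$, also $r/p'-1=r/q'$. Moreover, $\tilde U$ and $\tilde V$ are locally absolutely continuous as they are integrals, and \eqref{EQ:U_tilde}--\eqref{EQ:V_tilde} give $\tilde U'(t)=-U_1(t)$ and $\tilde V'(t)=V_1(t)$. These identities let me rewrite the integrand of $\mathcal{A}_2^r$ as a perfect derivative: using $r/q-1=r/p$ we have $\tilde U^{r/p}\,\tilde U'=(q/r)\,(\tilde U^{r/q})'$, so that $\mathcal{A}_2^r=-(q/r)\int_0^\infty \tilde V^{r/p'}\,(\tilde U^{r/q})'\,dt$.

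I would then integrate by parts and use $(\tilde V^{r/p'})'=(r/p')\,\tilde V^{r/q'}\,\tilde V'=(r/p')\,\tilde V^{r/q'}\,V_1$, which follows from $r/p'-1=r/q'$:
\[
\mathcal{A}_2^r=-\frac{q}{r}\Big[\tilde U^{r/q}\,\tilde V^{r/p'}\Big]_0^\infty+\frac{q}{r}\int_0^\infty \tilde U^{r/q}\,(\tilde V^{r/p'})'\,dt
=-\frac{q}{r}\Big[\tilde U^{r/q}\,\tilde V^{r/p'}\Big]_0^\infty+\frac{q}{p'}\,\mathcal{A}_1^r.
\]
Once the boundary term is shown to vanish, this is exactly \eqref{EQ:Equi1}, and the constant $q/p'$ emerges directly from $(q/r)(r/p')$.

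The main obstacle is the rigorous justification of this integration by parts, i.e.\ the vanishing of $[\tilde U^{r/q}\,\tilde V^{r/p'}]_0^\infty$. As $t\to 0$ we have $\tilde V(t)\to 0$ while $\tilde U(t)\le\tilde U(0)=\|u\|_{L^1(\mathbb X\setminus\{a\})}<\infty$ by hypothesis, so the product vanishes since $r/p'>0$; as $t\to\infty$ we have $\tilde U(t)\to 0$ with $r/q>0$, but $\tilde V(t)$ may diverge, which requires care. I would handle this by first carrying out the computation on a truncated interval $[\varepsilon,N]$, where all quantities are finite and the chain-rule differentiation of the powers $\tilde U^{r/q}$ and $\tilde V^{r/p'}$ is legitimate, and then letting $\varepsilon\to 0$ and $N\to\infty$ by monotone convergence. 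This limiting argument simultaneously shows that the two sides are finite together or infinite together, so that \eqref{EQ:Equi1} holds as an identity in $[0,\infty]$.
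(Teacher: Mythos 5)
Your proposal follows essentially the same route as the paper's own proof: reduce $\mathcal A_2^r$ and $\mathcal A_1^r$ to radial integrals via the polar decomposition \eqref{EQ:polar}, use that $U(x)=\tilde U(|x|_a)$ and $V(x)=\tilde V(|x|_a)$, and integrate by parts once, the constant $q/p'$ arising as $(q/r)(r/p')$. Your exponent arithmetic ($r/p+1=r/q$, $r/p'-1=r/q'$) is correct, and your treatment of the endpoint $t=0$ (where $\tilde V\to 0$ while $\tilde U\le \|u\|_{L^1}<\infty$) is sound; in fact you are more explicit than the paper, which simply writes the boundary terms $-(q/r)U^{r/q}(\infty)V^{r/p'}(\infty)+(q/r)U^{r/q}(0)V^{r/p'}(0)$ and discards them without comment.

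However, the device you offer for what you yourself call the main obstacle --- the boundary term at infinity --- does not close it. Truncating to $[\varepsilon,N]$ and then letting $\varepsilon\to0$, $N\to\infty$ ``by monotone convergence'' controls only the two monotone integrals; after $\varepsilon\to 0$ the truncated identity reads
\[
\int_0^N\tilde U^{r/p}\tilde V^{r/p'}U_1\,dt\;+\;\frac{q}{r}\,\tilde U^{r/q}(N)\tilde V^{r/p'}(N)\;=\;\frac{q}{p'}\int_0^N\tilde U^{r/q}\tilde V^{r/q'}V_1\,dt,
\]
and monotone convergence says nothing about the limit of $B(N):=\tilde U^{r/q}(N)\tilde V^{r/p'}(N)$, which need not tend to $0$ without a further argument. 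To finish, one can argue as follows. If $\tilde V(\infty)<\infty$, then $B(N)\to0$ because $\tilde U(N)\to0$ (this is where $u\in L^1(\mathbb X\setminus\{a\})$ enters). If $\tilde V(\infty)=\infty$, treat the two directions separately. When $\mathcal A_1^r<\infty$, the identity forces $\mathcal A_2^r\le(q/p')\mathcal A_1^r<\infty$, hence $B(N)$ converges to some $L\ge0$; if $L>0$ then $\tilde U^{r/q}(t)\ge(L/2)\tilde V^{-r/p'}(t)$ for large $t$, so $\mathcal A_1^r\ge(L/2)\int^\infty\tilde V^{\,r/q'-r/p'}V_1\,dt=(L/2)\int^\infty\tilde V^{-1}\,d\tilde V=\infty$, a contradiction; hence $L=0$ and \eqref{EQ:Equi1} follows. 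Conversely, when $\mathcal A_1^r=\infty$ but $\mathcal A_2^r<\infty$, the identity forces $B(N)\to\infty$, so eventually $\tilde V^{r/p'}(t)\ge\tilde U^{-r/q}(t)$, giving $\mathcal A_2^r\ge\int^\infty\tilde U^{\,r/p-r/q}U_1\,dt=\int^\infty\tilde U^{-1}(-d\tilde U)=\infty$ (since $\tilde U(t)\to0$), again a contradiction; so both sides are infinite together. With this supplement your argument is complete --- and, to be fair, the same gap is present, silently, in the paper's own proof, which never justifies why $U^{r/q}(\infty)V^{r/p'}(\infty)$ (an expression of the form $0\cdot\infty$) vanishes.
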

  \begin{proof} Using integration by parts, we have
   \begin{align}
&  \mathcal A_2^r =\int_{\mathbb X} U^{r/p}(x)V^{r/p'}(x)u(x)dx \nonumber
\\
&= \int_0^\infty \tilde{U}^{r/p}(t) \tilde{V}^{r/p'}(t) U_1(t)dt\nonumber
\\
&=\int_0^\infty \bigg(\int_t^\infty U_1(\rho)d\rho \bigg)^{r/p} \tilde{V}^{r/p'}(t) U_1(t)dt\nonumber
\\
&=-(q/r)U^{r/q}(\infty)V^{r/p'}(\infty)+(q/r)U^{r/q}(0)V^{r/p'}(0)+({q/r})(r/p')\nonumber
\\
&\quad\times \int_{\mathbb X} U^{r/q}(x)V^{r/q'}(x)v^{1-p'}(x)dx\nonumber
\\
& = ({q/p'}) \int_{\mathbb X} U^{r/q}(x)V^{r/q'}(x)v^{1-p'}(x)dx \nonumber
\\
&=({q/p'}) \mathcal A_1 ^r, \nonumber 
  \end{align}
  completing the proof.
\end{proof}

\begin{lemma}
\label{lemma2}
Suppose that $\alpha, \beta$ and $\gamma$ are non-negative functions and $\gamma $ is a radial non-decreasing function of $\vert \cdot \vert_a$.
 If $\int_{\mathbb X \backslash{B(a,|x|_a )}} \alpha(y)dy\le \int_{\mathbb X \backslash{B(a,|x|_a )}} \beta(y)dy $ for all $x$, then $\int_ \mathbb X\gamma \alpha \le \int_\mathbb X\gamma \beta$.
 \end{lemma}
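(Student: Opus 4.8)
The plan is to exploit the layer-cake (distribution-function) representation of $\gamma$ together with the key structural fact that, since $\gamma$ is a non-decreasing function of the radial variable $|\cdot|_a$, each of its super-level sets is the complement of a ball centred at $a$. This is precisely the shape of the region $\mathbb X\backslash B(a,|x|_a)$ appearing in the hypothesis, so the desired comparison should follow by simply integrating the assumed inequality over the levels of $\gamma$.

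Concretely, I would write $\gamma(x)=g(|x|_a)$ with $g\colon[0,\infty)\to[0,\infty)$ non-decreasing, and use the identity $\gamma(x)=\int_0^\infty \mathbf{1}_{\{\gamma(x)>\tau\}}\,d\tau$. Setting $s_\tau:=\inf\{\rho\ge 0: g(\rho)>\tau\}$, the monotonicity of $g$ gives $\{x\in\mathbb X:\gamma(x)>\tau\}=\mathbb X\backslash B(a,s_\tau)$, up to the boundary sphere $\Sigma_{s_\tau}$. By Tonelli's theorem,
\[
\int_{\mathbb X}\gamma\,\alpha=\int_0^\infty\Big(\int_{\mathbb X\backslash B(a,s_\tau)}\alpha(y)\,dy\Big)\,d\tau ,
\]
and likewise with $\beta$ in place of $\alpha$. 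For each fixed $\tau$, choosing any point $x$ with $|x|_a=s_\tau$, the hypothesis gives $\int_{\mathbb X\backslash B(a,s_\tau)}\alpha\le\int_{\mathbb X\backslash B(a,s_\tau)}\beta$. Integrating this pointwise inequality in $\tau$ and reassembling by Tonelli then yields $\int_{\mathbb X}\gamma\,\alpha\le\int_{\mathbb X}\gamma\,\beta$, which is the claim. Note that the constant part $g(0)$ is handled automatically: for $\tau<g(0)$ one has $s_\tau=0$ and $\mathbb X\backslash B(a,0)=\mathbb X$, i.e. the hypothesis at $r=0$.

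The only genuinely delicate point is the identification of the super-level set $\{\gamma>\tau\}$ with a ball complement and the legitimacy of invoking the hypothesis at the threshold radius $s_\tau$: this radius need not be of the form $|x|_a$ for an actual point $x$, and the boundary sphere $\Sigma_{s_\tau}$ (as well as the distinction between $\{|x|_a>s_\tau\}$ and $\{|x|_a\ge s_\tau\}$) must be accounted for. I would resolve this by observing that the tail $r\mapsto\int_{\mathbb X\backslash B(a,r)}\alpha(y)\,dy$ is non-increasing in $r$, so that the assumed inequality extends from the attained radii to every $r\ge 0$ by a monotone limit, the same argument applying to $\beta$. Equivalently, and perhaps more cleanly, one may first reduce to a one-dimensional statement via the polar decomposition \eqref{EQ:polar}, writing $\int_{\mathbb X}\gamma\,\alpha=\int_0^\infty g(\rho)\big(\int_{\Sigma_\rho}\lambda\alpha\,d\sigma_\rho\big)\,d\rho$, and then integrate by parts against the non-negative Lebesgue--Stieltjes measure $dg$, using that the radial tail functions of $\alpha$ and $\beta$ are ordered by hypothesis; this avoids any fuss over individual spheres of measure zero.
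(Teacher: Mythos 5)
Your proof is correct, and your main argument takes a genuinely different route from the paper's. The paper's proof is a reduction: via the polar decomposition \eqref{EQ:polar} the hypothesis becomes the one-dimensional tail inequality $\int_{|x|_a}^{\infty}\alpha_1(r)\,dr\le\int_{|x|_a}^{\infty}\beta_1(r)\,dr$ for the spherical averages $\alpha_1(r)=\int_{\Sigma_r}\lambda(r,\sigma)\alpha(r,\sigma)\,d\sigma_r$ and $\beta_1(r)=\int_{\Sigma_r}\lambda(r,\sigma)\beta(r,\sigma)\,d\sigma_r$, after which Lemma 2.1 of Sinnamon and Stepanov \cite{SS-96} is quoted as a black box to conclude. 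You instead run the layer-cake formula directly on $\mathbb X$: super-level sets of a radial non-decreasing $\gamma$ are (up to a sphere) complements of balls centred at $a$, so Tonelli turns the claim into an integral in $\tau$ of exactly the hypothesised inequality. What this buys: your proof is self-contained, and it never invokes the polar decomposition, so the lemma is seen to hold on an arbitrary metric measure space. You are also more careful than the paper on one point: the hypothesis is given only at radii $|x|_a$ attained by points of $\mathbb X$, whereas your thresholds $s_\tau$ (and the cited one-dimensional lemma) need the tail inequality at arbitrary $s\ge 0$; your extension by monotone limits (writing $\{|y|_a\ge s\}$ as an increasing union of sets $\{|y|_a\ge s_n\}$ with $s_n$ attained radii and applying monotone convergence) settles this correctly, and the same observation is implicitly needed to legitimise the paper's own appeal to \cite{SS-96}, since $\alpha_1$ and $\beta_1$ vanish at non-attained radii. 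Your alternative route --- polar coordinates followed by integration by parts against the Stieltjes measure $dg$ --- is essentially the paper's proof with the one-dimensional lemma proved inline rather than cited; either version is acceptable, but the layer-cake argument is the cleaner contribution.
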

 \begin{proof}
  Let us denote
 $$\alpha_1(r)=\int_{\Sigma_r} \lambda(r,\sigma)\alpha(r,\sigma) d\sigma_r,$$
$$\beta_1(r)=\int_{\Sigma_r} \lambda(r,\sigma) \beta(r,\sigma) d\sigma_r,$$
$$\tilde{\gamma}(r)=\gamma(x),$$ \rm for $ \vert x \vert_a=r.$
 Given that,
$ \int_{\mathbb X \backslash{B(a,|x|_a )}} \alpha(y)dy\le \int_{\mathbb X \backslash{B(a,|x|_a )}} \beta(y)dy$,
changing to polar coordinates, we get
$$ \int_{\vert x \vert_a}^\infty {\alpha_1}(r)dr \le  \int_{\vert x \vert_a}^\infty {\beta_1}(r)dr.$$ 
\\
Using \cite[Lemma 2.1]{SS-96} which says if $\alpha, \beta$, $\gamma $ are non-negative functions and $\gamma $ is non-decreasing, and if $\int_x^\infty \alpha(y)dy\le \int_x^\infty \beta(y)dy $ for all $x$, then $\int_0^\infty\gamma\alpha\le\int_0^\infty\gamma\beta$.
Therefore,
\begin{align}
&&
\int_\mathbb X \gamma(x) \alpha(x)dx= \int_0^\infty \tilde{\gamma}(r) {\alpha_1}(r)dr\nonumber
& \le  \int_0^\infty \tilde{\gamma}(r) {\beta_1}(r)dr\nonumber=\int_\mathbb X \gamma(x) \beta(x)dx,
\end{align}
completing the proof.
\end{proof} 

\begin{prop}
\label{proposition1}
 Suppose that $u,b$ and $F$ are non-negative functions with $F$ non-decreasing such that $\int_{\mathbb X \backslash{B(a,|x|_a )}} b(y)dy<\infty$ for all $x\neq a$ and $\int_{\mathbb X} b(x)dx=\infty$.
If $0<q<p<\infty$, $F$ is radial in $\vert x \vert_a$, and $ 1/r=1/q-1/p$, then 
\begin{align}
&\bigg(\int_{\mathbb X}  F^q(x) u(x)dx \bigg)^{1/q}\nonumber
\\
&  \le(r/p)^{1/r} \bigg(\int_{\mathbb X} \bigg(\int_{\mathbb X \backslash{B(a,|x|_a )}} u(y)dy \bigg)^{r/q} \bigg(\int_{\mathbb X \backslash{B(a,|x|_a )}} b(y) dy\bigg)^{-r/q} b(x) dx\bigg)^{1/r}\nonumber 
\\
& \quad\times \bigg(\int_{\mathbb X} F^p(x) b(x)dx \bigg)^{1/p}.\nonumber
\end{align}
\end{prop}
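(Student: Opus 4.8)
The plan is to reduce the stated inequality to a one-dimensional statement through the polar decomposition \eqref{EQ:polar}, and then to handle that one-dimensional inequality for monotone functions. Since $F$ is radial, write $F(x)=\tilde F(|x|_a)$ with $\tilde F$ non-decreasing, and introduce the $b$-analogues of \eqref{EQ:U_1} and \eqref{EQ:U_tilde}, namely $B_1(\rho)=\int_{\Sigma_\rho}\lambda(\rho,\sigma)b(\rho,\sigma)\,d\sigma_\rho$ and $\tilde B(t)=\int_t^\infty B_1(\rho)\,d\rho$. Applying \eqref{EQ:polar} to the three integrals turns the claim into
$$\Big(\int_0^\infty \tilde F^q U_1\,d\rho\Big)^{1/q}\le (r/p)^{1/r}\Big(\int_0^\infty \tilde U^{r/q}\tilde B^{-r/q}B_1\,d\rho\Big)^{1/r}\Big(\int_0^\infty \tilde F^p B_1\,d\rho\Big)^{1/p},$$
with $\tilde U$ as in \eqref{EQ:U_tilde}. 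The hypotheses $\int_{\mathbb X}b=\infty$ and $\int_{\mathbb X\backslash B(a,|x|_a)}b<\infty$ become $\tilde B(0)=\infty$, $\tilde B(\infty)=0$, so $\tilde B$ is a decreasing bijection of $(0,\infty)$; moreover, if the right-hand side is finite then $\tilde F(0)=0$, since otherwise $\int_0^\infty \tilde F^p B_1\ge \tilde F(0)^p\int_0^\infty B_1=\infty$. I will also use that $p/q$ and $r/q$ are conjugate, as $q/p+q/r=q(1/p+1/r)=1$.

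The engine of the one-dimensional estimate is the pointwise bound from monotonicity: for every $\rho$,
$$\tilde F(\rho)^p\,\tilde B(\rho)=\tilde F(\rho)^p\int_\rho^\infty B_1\le \int_\rho^\infty \tilde F^p B_1\le J,\qquad J:=\int_0^\infty \tilde F^p B_1\,d\rho .$$
To convert the density $U_1$ on the left into the tail $\tilde U$ occurring in the middle factor, I would write $\tilde F^q(\rho)=\int_0^\rho d(\tilde F^q)$ (legitimate as $\tilde F(0)=0$) and apply Fubini, obtaining $\int_0^\infty \tilde F^q U_1\,d\rho=\int_0^\infty \tilde U\,d(\tilde F^q)$, and similarly $J=\int_0^\infty \tilde B\,d(\tilde F^p)$. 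Before treating a general $\tilde F$, one verifies the inequality directly on the non-decreasing indicators $\tilde F=\mathbf 1_{[c,\infty)}$, where the left-hand side equals $\tilde U(c)$ and $J=\tilde B(c)$: monotonicity of $\tilde U$ together with the substitution $y=\tilde B(\rho)$ gives $\int_0^\infty\tilde U^{r/q}\tilde B^{-r/q}B_1\ge (p/r)\,\tilde U(c)^{r/q}\tilde B(c)^{-r/p}$, and the factor $(p/r)^{q/r}$ cancels $(r/p)^{q/r}$, so the constant $(r/p)^{1/r}$ appears exactly and cannot be improved for these test functions.

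The delicate point — and the main obstacle — is passing from the density to the tail for a general monotone $\tilde F$ while keeping this sharp constant. A single application of Hölder with the exponents $p/q$ and $r/q$ to $\int_0^\infty \tilde U\,d(\tilde F^q)$, arranged so that one factor reproduces $J$, leaves a remainder of the form $\int_0^\infty \tilde U^{r/q}\tilde B^{-r/p}\,d(\log\tilde F)$, which still depends on $\tilde F$ and is not controlled by the $\tilde U,\tilde B$-factor alone. Likewise, the monotone comparison of Lemma \ref{lemma2} applied with $\gamma=\tilde F^p$ only reduces matters to a tail inequality that fails with any uniform constant, so it yields a non-sharp bound. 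The correct route is the rearrangement (level-function) argument, which reduces a general non-increasing profile to the indicator case computed above; concretely I would invoke the one-dimensional weighted inequality for monotone functions of Sinnamon and Stepanov \cite{SS-96} after the polar reduction. It is exactly here that the assumption $\int_{\mathbb X}b=\infty$ is needed, both to force $\tilde F(0)=0$ and to make $\tilde B$ the decreasing bijection of $(0,\infty)$ on which the rearrangement argument relies.
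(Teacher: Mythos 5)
Your route is valid and it is genuinely different from the paper's. Because $F$ is radial and the weights $u,b$ enter every integral linearly, the polar decomposition \eqref{EQ:polar} converts all three integrals \emph{exactly} into one-dimensional ones against the slice weights $U_1$ and $B_1$: the left-hand side becomes $\int_0^\infty \tilde F^q U_1\,d\rho$, the middle factor becomes $\int_0^\infty \tilde U^{r/q}\tilde B^{-r/q}B_1\,d\rho$ (since $U$ and $B$ are themselves radial), and the right factor becomes $\int_0^\infty \tilde F^p B_1\,d\rho$, while the hypotheses translate verbatim into $\tilde B(t)<\infty$ for $t>0$, $\tilde B(0)=\infty$, and $\tilde F$ non-decreasing. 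The resulting one-dimensional inequality, with the same constant $(r/p)^{1/r}$, is precisely the monotone-function proposition of \cite{SS-96} (their Proposition 2.2, sitting between the Lemma 2.1 and Proposition 2.3 that this paper quotes elsewhere), so citing it closes the proof. This is in the same spirit as the paper's own proofs of Lemma \ref{lemma2} and Proposition \ref{proposition2}, both of which reduce to one dimension and then quote \cite{SS-96}. For the present proposition, however, the paper instead re-runs the Sinnamon--Stepanov argument in the metric setting: it inserts the factor $\big(\int_{B(a,|x|_a)}U^{r/p}(y)B^{-r/q}(y)b(y)\,dy\big)^{\pm q/r}$ into the left-hand side, applies H\"older with the conjugate exponents $r/q$ and $p/q$, identifies the first factor with the stated middle factor via Fubini and $r/p+1=r/q$, and bounds the second factor by Lemma \ref{lemma2}. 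Your version buys brevity; the paper's buys a self-contained argument whose template does not depend on the reduction being exact.

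Two claims in your discussion are wrong, though neither invalidates your proof. First, your assertion that the route through Lemma \ref{lemma2} with $\gamma=F^p$ ``fails with any uniform constant'' and ``yields a non-sharp bound'' is incorrect: that is exactly the paper's method, and it produces the constant $(r/p)^{1/r}$. What you are missing is that the lemma is not applied to the original inequality but to the \emph{second H\"older factor} after the insertion step, with $\alpha(x)=\big(\int_{B(a,|x|_a)}U^{r/p}(y)B^{-r/q}(y)b(y)\,dy\big)^{-p/r}u(x)$ and $\beta=(r/p)^{p/r}b$; the required tail comparison $\int_{\mathbb X\backslash B(a,|x|_a)}\alpha\le\int_{\mathbb X\backslash B(a,|x|_a)}\beta$ holds because $\int_0^{|x|_a}\tilde B^{-r/q}\,d(-\tilde B)=(p/r)\,B^{-r/p}(x)$, which is where the hypothesis $\int_{\mathbb X}b=\infty$ enters. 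Second, the proof of the cited result in \cite{SS-96} is not a rearrangement/level-function argument; it is this same insertion-plus-H\"older-plus-monotone-comparison argument. Finally, note that your intermediate material (the pointwise bound $\tilde F^p\tilde B\le J$, the Fubini identities, the indicator computation) plays no role once you invoke the one-dimensional proposition; it only confirms that the constant is attained in the limit on indicators, so you could delete it without weakening the proof.
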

\begin{proof}

Let us denote
%\\$$U(x)=\int_{\mathbb X \backslash{B(a,|x|_a )}} u(y)dy, $$ 
\\
\begin{align*}
B(x) &= \int_{\mathbb X \backslash{B(a,|x|_a )}} b(y)dy,\\
 %$$\tilde{U}(t)=\int_{\vert x \vert}^ {\infty}\int_{\Sigma_t} \lambda(t,\sigma) u(t,\sigma)d\sigma_t dt,  U_1(t)=\int_{\Sigma_t} \lambda(t,\sigma) u(t,\sigma)d\sigma_t$$ 
  \tilde{B}(t) &=\int_{t}^ {\infty}\int_{\Sigma_\rho} \lambda(\rho,\omega) b(\rho,\omega)d\omega_{\rho} d\rho,\\
  B_1(\rho) &=\int_{\Sigma_\rho} \lambda(\rho,\omega) b(\rho,\omega)d\omega_{\rho},\\
   \tilde{F}(t) &=F(x) , \ \textrm {for} \ \vert x \vert_a=t.
   \end{align*}
Applying H\"older's inequality with indices $q/r$ and $q/p$, we get 

\begin {align}
&
\bigg(\int_{\mathbb X}  F^q(x) u(x)dx \bigg)^{1/q}\nonumber
\\
&=\bigg(\int_{\mathbb X}  \bigg(\int_{B(a,|x|_a )} U^{r/p}(y) B^{-r/q}(y) b(y)dy \bigg)^{q/r} F^q(x)\nonumber
\\
& \quad\times \bigg(\int_{B(a,|x|_a )} U^{r/p}(y) B^{-r/q}(y) b(y)dy \bigg)^{-q/r} u(x)dx \bigg)^{1/q} \nonumber
\\
& =\bigg(\int_0^\infty  \bigg(\int_0^t \tilde {U}^{r/p}(\rho) \tilde {B}^{-r/q}(\rho) B_1(\rho)d\rho \bigg)^{q/r} \tilde{F}^q(t)  \bigg(\int_0^t \tilde {U}^{r/p}(\rho) \tilde {B}^{-r/q}(\rho) B_1(\rho)d\rho \bigg)^{-q/r} U_1(t) dt\bigg)^{1/q}\nonumber
\\
& =\bigg(\int_0^\infty  \bigg(\int_0^t \tilde {U}^{r/p}(\rho) \tilde {B}^{-r/q}(\rho) B_1(\rho)d\rho \bigg)^{q/r} \tilde{F}^q(t)  \bigg(\int_0^t \tilde {U}^{r/p}(\rho) \tilde {B}^{-r/q}(\rho) B_1(\rho)d\rho \bigg)^{-q/r}\nonumber
\\
 &\quad\times U_1^{q/r+q/p}(t) dt\bigg)^{1/q}\nonumber
\\
&\le  \bigg(\int_0^\infty \bigg(\int_0^t \tilde {U}^{r/p}(\rho) \tilde {B}^{-r/q}(\rho) B_1(\rho)d\rho \bigg) U_1(t)dt \bigg)^{1/r} \bigg(\int_0^\infty \tilde {F}^p(t) \bigg(\int_0^t \tilde {U}^{r/p}(\rho) \tilde {B}^{-r/q}(\rho) B_1(\rho)d\rho \bigg)^{-p/r}\nonumber
\\
&\quad\times U_1(t)dt \bigg)^{1/p} .\nonumber 
\end{align} 

On interchanging the order of integration and using $r/p+1=r/q$, the first factor becomes
\begin{align}
 & \bigg(\int_0^\infty  \tilde {U}^{r/p}(\rho) \tilde {B}^{-r/q}(\rho) B_1(\rho) \bigg(\int_\rho^{\infty} U_1(t)dt\bigg) d\rho \bigg)^{1/r}\nonumber
\\
&= \bigg(\int_0^\infty  \tilde {U}^{r/q}(\rho) \tilde {B}^{-r/q}(\rho) B_1(\rho) d\rho \bigg)^{1/r}\nonumber
\\
 &=\bigg(\int_{\mathbb X} \bigg(\int_{\mathbb X \backslash{B(a,|x|_a )}} u(y)dy \bigg)^{r/q} \bigg(\int_{\mathbb X \backslash{B(a,|x|_a )}} b(y) dy\bigg)^{-r/q} b(x) dx\bigg)^{1/r}.\nonumber
 \end{align}
 \\
  To complete the proof we apply Lemma \ref{lemma2} to the second factor. We take $\alpha(x)=\bigg(\int_{B(a,|x|_a )} U^{r/p}(y) B^{-r/q}(y)b(y)dy \bigg)^{-p/r} u(x)$, $\beta(x)=(r/p)^{p/r}b(x)$, and $\gamma(x)=F^p(x)$ in Lemma  \ref{lemma2}. As $\gamma$ is  non-decreasing by assumption, it remains to check that 
  $$\int_{\mathbb X \backslash{B(a,|x|_a )}} \alpha(y)dy \le \int_{\mathbb X \backslash{B(a,|x|_a )}} \beta(y)dy,$$ for all $x$. Since $\bigg(\int_0^t \tilde {U}^{r/p}(\rho) \tilde {B}^{-r/q}(\rho) B_1(\rho)d\rho \bigg)^{-p/r}$ and $\tilde{U}$ are non-increasing,
\begin{align}
\int_{\mathbb X \backslash{B(a,|x|_a )}} \alpha(y)dy\nonumber
&= \int_{\vert x \vert_a}^\infty \bigg(\int_0^t \tilde {U}^{r/p}(\rho) \tilde {B}^{-r/q} (\rho)B_1(\rho)d\rho \bigg)^{-p/r} U_1(t)dt\nonumber
\\
& \le \bigg(\int_0^{ \vert x \vert_a}  \tilde {U}^{r/p}(\rho) \tilde {B}^{-r/q}(\rho) B_1(\rho)d\rho \bigg)^{-p/r} \int_{\vert x \vert_a}^\infty U_1(t)dt\nonumber
\\
& \le \bigg(\int_0^{ \vert x \vert_a}   \tilde {B}^{-r/q}(\rho) B_1(\rho)d\rho \bigg)^{-p/r} \tilde U^{-1}(\vert x \vert_a) \int_{\vert x \vert_a}^\infty U_1(t)dt\nonumber
\\
& =\bigg(\int_0^{ \vert x \vert_a}  \tilde {B}^{-r/q}(\rho) B_1(\rho)d\rho \bigg)^{-p/r}\nonumber
\\
&=\bigg(\int_0^{ \vert x \vert_a}  \tilde {B}^{-r/q}(\rho) d(-\tilde{B}(\rho)) \bigg)^{-p/r}\nonumber
\\
&=\bigg((p/r)  B^{-r/p}(x)\bigg)^{-p/r} =\int_{\mathbb X \backslash{B(a,|x|_a )}} \beta(y)dy. \nonumber
\end{align}
Finally, by using Lemma \ref{lemma2} we get
\begin {align}
&
\bigg(\int_{\mathbb X}  F^q(x) u(x)dx \bigg)^{1/q}\nonumber
\\
&\le  \bigg(\int_{\mathbb X} \bigg(\int_{\mathbb X \backslash{B(a,|x|_a )}} u(y)dy \bigg)^{r/q} \bigg(\int_{\mathbb X \backslash{B(a,|x|_a )}} b(y) dy\bigg)^{-r/q} b(x) dx\bigg)^{1/r}  \bigg(\int_{\mathbb X} (r/p)^{p/r} F^p(x) b(x)dx \bigg)^{1/p}\nonumber
\\
&= (r/p)^{1/r} \bigg(\int_{\mathbb X} \bigg(\int_{\mathbb X \backslash{B(a,|x|_a )}} u(y)dy \bigg)^{r/q} \bigg(\int_{\mathbb X \backslash{B(a,|x|_a )}} b(y) dy\bigg)^{-r/q} b(x) dx\bigg)^{1/r}  \bigg(\int_{\mathbb X} F^p(x) b(x)dx \bigg)^{1/p}\nonumber,
\end{align}
which completes the proof.
\end{proof}

\begin{prop}
\label{proposition2}
Suppose $1<p<\infty$ and $w$ is a non-negative function satisfying
\\
  \begin{align}\label{EQ:Condn}
   0<\displaystyle \int_{B(a,|x|_a )} w(y)dy <\infty , \forall x\neq a, \displaystyle \int_{\mathbb X} w(x)dx =\infty. 
   \end{align}
\\
Then 
\\
\begin{align}\label{EQ:Prepo}
\bigg(\displaystyle \int_{\mathbb X}\bigg( \int_{B(a,|x|_a )} f(y)dy \bigg)^p \bigg( \int_{B(a,|x|_a )} w(y)dy \bigg)^{-p} w(x)dx\bigg)^{1/p}\nonumber
\\
\le p' \bigg(\int_{\mathbb X} f^p(x) w^{1-p}(x)dx\bigg)^{1/p},
\end{align}
for all measurable functions $f\ge0.$
\end{prop}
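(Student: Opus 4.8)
The plan is to reduce everything to a weighted estimate in the radial variable by applying Hölder's inequality \emph{directly} to the inner ball-integral, with a weight splitting calibrated by a free parameter $s$ that I will optimise only at the very end. Throughout I would write $W(x)=\int_{B(a,|x|_a)}w(y)\,dy$, which by \eqref{EQ:Condn} is strictly positive, finite, radial and non-decreasing in $|x|_a$; its polar profile $\tilde W(t)$ (the value of $W$ on $\{|x|_a=t\}$) satisfies $\tilde W(0)=0$, $\tilde W(t)\to\infty$ as $t\to\infty$ because $\int_{\mathbb X}w=\infty$, and $d\tilde W=W_1\,dt$ with $W_1(\rho)=\int_{\Sigma_\rho}\lambda(\rho,\sigma)w(\rho,\sigma)\,d\sigma_\rho$ in the notation of \eqref{EQ:polar}.

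First I would fix $s\in(-p/p',0)$ and split $f=\big(f^p w^{1-p}W^{-s}\big)^{1/p}\cdot\big(w^{(p-1)/p}W^{s/p}\big)$ inside the ball. Hölder's inequality with exponents $p$ and $p'$ then gives
\[
\int_{B(a,|x|_a)}f(y)\,dy\le\bigg(\int_{B(a,|x|_a)}f^p w^{1-p}W^{-s}\,dy\bigg)^{1/p}\bigg(\int_{B(a,|x|_a)}w\,W^{sp'/p}\,dy\bigg)^{1/p'}.
\]
The second factor is purely radial: passing to polar coordinates via \eqref{EQ:polar} and recognising $w\,dy$ as $d\tilde W$ in the radial variable turns it into $\int_0^{|x|_a}\tilde W^{sp'/p}\,d\tilde W$, which equals $(sp'/p+1)^{-1/p'}W^{(sp'/p+1)/p'}(x)$, the constraint $s>-p/p'$ being exactly what guarantees integrability near the origin.

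Next I would raise this bound to the power $p$, multiply by $W^{-p}(x)w(x)$, and integrate over $\mathbb X$. A short bookkeeping of exponents shows that the surviving power of $W$ collapses to $W^{s-1}(x)$ (since $(sp'/p+1)\tfrac{p}{p'}-p=s-1$). Then, interchanging the $x$- and $y$-integrations by Tonelli's theorem (legitimate as all integrands are non-negative), the right-hand side becomes
\[
\Big(\tfrac{p}{sp'+p}\Big)^{p/p'}\int_{\mathbb X}f^p(y)w^{1-p}(y)W^{-s}(y)\bigg(\int_{\mathbb X\setminus B(a,|y|_a)}W^{s-1}(x)w(x)\,dx\bigg)dy.
\]
The inner $x$-integral is again radial; writing it as $\int_{|y|_a}^\infty\tilde W^{s-1}\,d\tilde W$ and using $s<0$ together with $\tilde W(\infty)=\infty$, it evaluates to $-\tfrac1s\,W^s(y)$, so the factors $W^{-s}(y)W^s(y)$ cancel and leave $-\tfrac1s\int_{\mathbb X}f^p w^{1-p}\,dy$.

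It then remains to optimise the resulting constant $C(s)^p=\big(\tfrac{p}{sp'+p}\big)^{p/p'}\big(-\tfrac1s\big)$ over $s\in(-p/p',0)$. Setting $\mu=sp'/p+1\in(0,1)$ rewrites this as minimising $\tfrac{p'}{p}\mu^{-p/p'}(1-\mu)^{-1}$, whose unique critical point is $\mu=1/p'$; substituting back and simplifying with $p/p'=p-1$ produces exactly $(p')^p$, i.e. $C=p'$. I expect the only genuinely delicate points to be the convergence of the tail integral $\int_{|y|_a}^\infty\tilde W^{s-1}\,d\tilde W$ — which is precisely where the hypothesis $\int_{\mathbb X}w=\infty$ enters — and the careful tracking of the powers of $W$; the Tonelli interchange and the polar reductions are routine once \eqref{EQ:polar} and \eqref{EQ:Condn} are in hand.
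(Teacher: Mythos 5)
Your proof is correct, and it takes a genuinely different route from the paper's. The paper passes to polar coordinates and invokes the one-dimensional inequality of Sinnamon and Stepanov \cite[Proposition 2.3]{SS-96} as a black box, which yields a bound in terms of the spherical averages $f_1,w_1$; it then needs a \emph{second} application of H\"older's inequality, on each sphere $\Sigma_t$, to convert $\int_0^\infty f_1^p w_1^{1-p}\,dt$ into $\int_{\mathbb X}f^pw^{1-p}\,dx$. You instead argue directly on $\mathbb X$: a single H\"older application on the ball with the parametrized splitting $f=(f^pw^{1-p}W^{-s})^{1/p}\cdot(w^{(p-1)/p}W^{s/p})$, exact evaluation of the two radial integrals (legitimate because the polar decomposition \eqref{EQ:polar} makes $\tilde W(t)=\int_0^t W_1(\rho)\,d\rho$ absolutely continuous, so the substitution $u=\tilde W(t)$ is valid for non-negative integrands; the tail evaluation $\int_{|y|_a}^\infty\tilde W^{s-1}\,d\tilde W=-W^s(y)/s$ is precisely where the hypothesis $\int_{\mathbb X}w\,dx=\infty$ from \eqref{EQ:Condn} enters, just as in the one-dimensional proof), a Tonelli interchange, and optimisation in $s$, the minimum $C=p'$ being attained at $s=-1/p'$. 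What your route buys: it is self-contained (it reproves the cited 1D result as the special case $\mathbb X=(0,\infty)$), and it fuses the paper's two H\"older steps into one, since your splitting weight $w^{(p-1)/p}W^{s/p}$ depends on the full variable $y$ rather than on $|y|_a$ alone; what the paper's route buys is brevity given the literature and a clean separation of the metric-measure reduction from the classical one-dimensional inequality. Two harmless remarks: (i) the integral $\int_0^{|x|_a}\tilde W^{sp'/p}\,d\tilde W$ equals $(sp'/p+1)^{-1}W^{sp'/p+1}(x)$, and it is its $(1/p')$-th power that is the H\"older factor you display --- your subsequent exponent bookkeeping ($(sp'/p+1)p/p'-p=s-1$) uses the correct version, so nothing downstream is affected; (ii) on the set where $w=0$ the splitting degenerates into $0\cdot\infty$, but there the inequality is trivial unless $f=0$ a.e.\ on that set, in which case one applies H\"older on $B(a,|x|_a)\cap\{w>0\}$ --- the same implicit convention as in the paper's own proof.
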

\begin{proof} Let us denote:
${f_1}(\rho)=\int_{\Sigma_\rho}  \lambda(\rho,\sigma) f(\rho,\sigma) d\sigma_\rho $, 
${w_1}(\rho)=\int_{\Sigma_\rho}  \lambda(\rho,\sigma) w(\rho,\sigma) d\sigma_\rho .$

Consider the left hand side of \eqref{EQ:Prepo} and change it into polar coordinates, to get

\begin{align}
& \bigg(\displaystyle \int_{\mathbb X}\bigg( \int_{B(a,|x|_a )} f(y)dy \bigg)^p \bigg( \int_{B(a,|x|_a )} w(y)dy \bigg)^{-p} w(x)dx\bigg)^{1/p}\nonumber
\\
&=\bigg(\displaystyle \int_0^\infty \bigg( \int_0^t {f_1}(\rho)d\rho \bigg)^p \bigg( \int_0^t {w_1}(\rho)d\rho \bigg)^{-p} {w_1}(t)dt\bigg)^{1/p}\nonumber.
\end{align}
Now, let us use \cite[Proposition 2.3]{SS-96} which says that if $1<p<\infty$ and $w$ is a non-negative function satisfying 
\\
  \begin{align}
   & 0<\displaystyle \int_0^x w(y)dy <\infty , \forall x> 0, \displaystyle \int_0^\infty w(x)dx =\infty,\nonumber 
   \end{align}
   then 
   \begin{align}
\bigg(\displaystyle \int_0^\infty \bigg( \int_0^t f(\rho)d\rho \bigg)^p \bigg( \int_0^t w(\rho)d\rho \bigg)^{-p} w(t)dt\bigg)^{1/p}\nonumber
\\
\le p' \bigg(\int_0^\infty f^p(t) w^{1-p}(t)dt\bigg)^{1/p}.\nonumber
\end{align}

 By using  H\"older's inequality to the indices $1/p$ and $1/p'$, the LHS of \eqref{EQ:Prepo} can be estimated by
\begin{align}
&
 \le  p' \bigg(\int_0^\infty {f_1}^p(t) {w_1}^{1-p}(t)dt\bigg)^{1/p}\nonumber 
 \\
 &= p' \bigg(\int_0^\infty \bigg(\int_{\Sigma_t}  \lambda(t,\sigma) f(t,\sigma) d\sigma_t \bigg)^p \bigg(\int_{\Sigma_t}  \lambda(t,\sigma) w(t,\sigma) d\sigma_t \bigg)^{1-p} dt\bigg)^{1/p}\nonumber
 \\
 &= p' \bigg(\int_0^\infty \bigg(\int_{\Sigma_t}  \lambda(t,\sigma) f(t,\sigma) w^{(1-p)/p+(p-1)/p}d\sigma_t \bigg)^p \bigg(\int_{\Sigma_t}  \lambda(t,\sigma) w(t,\sigma) d\sigma_t \bigg)^{1-p} dt \bigg)^{1/p}\nonumber
 \\
&\le p' \bigg(\int_0^\infty \bigg(\int_{\Sigma_t}  \lambda(t,\sigma) f^p(t,\sigma) w^{1-p}d\sigma_t \bigg)\bigg(\int_{\Sigma_t}  \lambda(t,\sigma) w^{p'(p-1)/p}(t,\sigma) d\sigma_t \bigg)^{p-1}\nonumber
\\
& \quad\times \bigg(\int_{\Sigma_t}  \lambda(t,\sigma) w(t,\sigma) d\sigma_t \bigg)^{1-p} dt \bigg)^{1/p}\nonumber
\\
&
= p' \bigg(\int_0^\infty \int_{\Sigma_t}  \lambda(t,\sigma) f^p(t,\sigma) w^{1-p}(t,\sigma) d\sigma_t dt \bigg)^{1/p}\nonumber
\\
&= p' \bigg(\displaystyle\int_{\mathbb X} f^p(x) w^{1-p}(x) dx \bigg)^{1/p},\nonumber
\end{align}
completing the proof.
\end{proof}

Now, we prove our Theorem \ref{Theorem1} :
\begin{proof} 
Set $w=v^{1-p'}$. Suppose that inequality \eqref{EQ:Hardy1}  holds for all $f\ge0$ and let $u_0$ and $w_0$ be $L^1$ functions  such that $0 < u_0 \le u$ and $0 < w_0 \le w$. 
We denote
 $$ \tilde{u_0}(\rho)=\int_{\Sigma_{\rho}} \lambda(\rho,\omega)u_0(\rho, \omega)d\omega_{\rho},$$ 
 $$ \tilde{w_0}(\rho)=\int_{\Sigma_{\rho}} \lambda(\rho,\omega)w_0(\rho, \omega)d\omega_{\rho}.$$

Let us apply inequality \eqref{EQ:Hardy1} to the function
$$f(x)=\displaystyle\bigg( \int_{\mathbb X \backslash{B(a,|x|_a )}}u_0(y)dy \bigg)^{r/pq} \bigg(\int_{B(a,|x|_a )} w_0(y)dy\bigg)^{r/pq'} w_0(x).$$

After changing to polar coordinates and using $$ \frac{r}{(pq')}+1=r(\frac{1}{pq'}+\frac{1}{r})\\
=r\bigg(\frac{1}{p}(1-\frac{1}{q})+\frac{1}{q}-\frac{1}{p}\bigg)=\frac{r}{p'q},$$ 
we have
\begin{align*} 
&
\int_{B(a,|x|_a )} f(y)dy = \int_0^{\vert x \vert_a} \int_{\Sigma_{t}} \lambda(t,\sigma) \bigg(\int_{t}^\infty \int_{\Sigma_{\rho}} \lambda(\rho,\omega)u_0(\rho,\omega)d\rho d\omega_{\rho} \bigg)^{r/pq}\nonumber
\\
& \quad\times \bigg(\int_0^t \int_{\Sigma_{\rho}} \lambda(\rho,\omega) w_0 (\rho, \omega)d\rho d\omega_{\rho} \bigg)^{r/pq'}w_0(t,\sigma)dt d\sigma_t\nonumber 
\\
&=\int_0^{\vert x \vert_a} \bigg(\int_{t}^\infty  \tilde u_0(\rho)d\rho \bigg)^{r/pq} \bigg(\int_0^t \tilde w_0 (\rho)d\rho \bigg)^{r/pq'} \tilde w_0(t)dt\nonumber
\\
& \ge  \bigg(\int_{{\vert x \vert_a}}^\infty  \tilde u_0(\rho)d\rho \bigg)^{r/pq} \int_0^{\vert x \vert_a} \bigg(\int_0^t \tilde w_0 (\rho)d\rho \bigg)^{r/pq'} \tilde w_0(t)dt\nonumber
\\
& =(p'q/r) \bigg(\int_{\vert x \vert_a}^\infty  \tilde u_0(\rho)d\rho \bigg)^{r/pq} \bigg(\int_0^{\vert x \vert_a} \tilde w_0 (\rho)d\rho \bigg)^{r/p'q}\nonumber 
\\
&=(p'q/r) \bigg( \int_{\mathbb X \backslash{B(a,|x|_a )}}u_0(y)dy \bigg)^{r/pq} \bigg(\int_{B(a,|x|_a )} w_0(y)dy\bigg)^{r/p'q}. \nonumber
\end{align*}
%In the second last step we used $$ \frac{r}{(pq')}+1=r(\frac{1}{pq'}+\frac{1}{r})=r\bigg(\frac{1}{p}(1-\frac{1}{q})+\frac{1}{q}-\frac{1}{p}\bigg)=\frac{r}{p'q}.$$
%This estimate combined with \eqref{EQ:Hardy1} yields 

Observe that $w=v^{1-p'}$ implies that $v=w^{1/(1-p')}=w^{1-p}$ since $$1-1/(1-p')=-p'/(1-p')=-1/(1/p'-1)=p.$$
We then have
\begin{align}
 &\bigg(\int_{\mathbb X} (p'q/r)^q  \bigg( \int_{\mathbb X \backslash{B(a,|x|_a )}}u_0(y)dy \bigg)^{r/p} \bigg(\int_{B(a,|x|_a )} w_0(y)dy\bigg)^{r/p'} u_0(x)dx \bigg)^{1/q}\nonumber
 \\
 & \le \bigg(\int_\mathbb X\bigg(\int_{B(a,\vert x \vert_a)} f(y)  dy\bigg)^q u(x)dx\bigg)^{1/q}\nonumber
 \\
 & \le C \bigg(\int_{\mathbb X} f^p(x) w^{1-p}(x)dx \bigg)^{1/p}\nonumber
 \\
 &=C \bigg(\int_{\mathbb X}  \bigg( \int_{\mathbb X \backslash{B(a,|x|_a )}}u_0(y)dy \bigg)^{r/q} \bigg(\int_{B(a,|x|_a )} w_0(y)dy\bigg)^{r/q'} w_0^p(x) w^{1-p}(x)dx \bigg)^{1/p}\nonumber
 \\
 & \le C \bigg(\int_{\mathbb X}  \bigg( \int_{\mathbb X \backslash{B(a,|x|_a )}}u_0(y)dy \bigg)^{r/q} \bigg(\int_{B(a,|x|_a )} w_0(y)dy\bigg)^{r/q'} w_0^p(x) w_0^{1-p}(x)dx \bigg)^{1/p}\nonumber
 \\
 &= C  \bigg(\int_{\mathbb X}  \bigg( \int_{\mathbb X \backslash{B(a,|x|_a )}}u_0(y)dy \bigg)^{r/q} \bigg(\int_{B(a,|x|_a )} w_0(y)dy\bigg)^{r/q'} w_0(x)dx \bigg)^{1/p}\nonumber
 \\
 &=C   \bigg(\int_{\mathbb X} (p'/r)(r/q) \bigg( \int_{\mathbb X \backslash{B(a,|x|_a )}}u_0(y)dy \bigg)^{r/p} \bigg(\int_{B(a,|x|_a )} w_0(y)dy\bigg)^{r/p'} u_0(x)dx \bigg)^{1/p}\nonumber
 \\
 &= C (p'/q)^{1/p}  \bigg(\int_{\mathbb X}  \bigg( \int_{\mathbb X \backslash{B(a,|x|_a )}}u_0(y)dy \bigg)^{r/p} \bigg(\int_{B(a,|x|_a )} w_0(y)dy\bigg)^{r/p'} u_0(x)dx \bigg)^{1/p},\nonumber
 \end{align}
where the second last equality is integration by parts. Since   $u_0$ and $w_0$ are in $L^1$ and are positive, the integral on the right hand side is finite. Therefore, we have

$$
 (p'q/r)(q/p')^{1/p}  \bigg(\int_\mathbb X \bigg(\int_{\mathbb X \backslash{B(a,|x|_a )}} u_0(y)dy \bigg)^{r/p}  \bigg( \int_{B(a,|x|_a )}w_0(y)dy \bigg)^{r/p'} u_0(x)dx\bigg)^{1/r} \le C.
$$
Approximating $u$ and $w$ by increasing sequence of $L^1$ functions, using $$ (p'q/r)(q/p')^{1/p} =(p' )(p')^{-(1/p)}q^{1/p}(q/r)=(p')^{1/p'}q^{1/p}(q(1/q-1/p))=(p')^{1/p'} q^{1/p} (1-q/p) $$  and applying the Monotone Convergence Theorem, we conclude that
$$
(p')^{1/p'} q^{1/p} (1-q/p) \mathcal A_2 \le C.
$$

Suppose now that $\mathcal A_2<\infty$ and, for the moment, that \eqref{EQ:Condn} holds for $w$. 
Set $V(x)=\displaystyle \int_{B(a,|x|_a )} w(y)dy$ and apply Proposition \ref{proposition1} with $b=V^{-p}w$ and $F(x)=\int_{B(a,|x|_a )} f(y)dy $. 
Let us denote $$ \tilde{V}(t)= \int_0^t \int_{\Sigma_\rho}\lambda(\rho,\sigma) w(\rho,\sigma) d\rho d\sigma_\rho ,$$ $$ V_1(\rho)=\int_{\Sigma_\rho} \lambda(\rho,\sigma)w(\rho,\sigma) d\sigma_\rho, $$ $$ U_1(\rho)= \int_{\Sigma_\rho} \lambda(\rho,\sigma)u(\rho,\sigma) d\sigma_\rho. $$

Also, 
$$\int_{\mathbb X \backslash{B(a,|x|_a )}} b(y)dy <\infty,$$
since $$\int_{\mathbb X \backslash{B(a,|x|_a )}} b(y)dy=\int_{\mathbb X \backslash{B(a,|x|_a )}} V^{-p}(y)w(y)dy =\int_{\vert x \vert_a}^\infty \tilde V^{-p}(\rho) V_1(\rho)d\rho=(p'/p) \tilde V^{1-p}(\vert x \vert_a) .$$ 

The conclusion of Proposition \ref{proposition1} becomes

 \begin{align}
 &
 \bigg(\int_\mathbb X \bigg(\int_{B(a,|x|_a )} f(y)dy \bigg)^q u(x)dx \bigg)^{1/q}\nonumber
 \\
 &\le (r/p)^{1/r}  \bigg(\int_\mathbb X  \bigg( \int_{\mathbb X \backslash{B(a,|x|_a )}}u(y)dy \bigg)^{r/q} \bigg(\int_{\mathbb X \backslash{B(a,|x|_a )}} V^{-p}(y)  w(y)dy\bigg)^{-r/q} V^{-p}(x)  w(x)dx \bigg)^{1/r}\nonumber
 \\
  & \quad\times  \bigg(\int_\mathbb X  \bigg(\int_{B(a,|x|_a )} f(y)dy \bigg)^p V^{-p}(x) w(x)dx \bigg)^{1/p}\nonumber
 \\
 & = (r/p)^{1/r} \bigg(\int_0^\infty \bigg( \int_\rho^\infty U_1(t)dt \bigg)^{r/q} \bigg( \int_\rho^\infty \tilde V^{-p}(t) V_1(t)dt \bigg)^{-r/q} \tilde V^{-p}(\rho) V_1(\rho)d\rho\bigg)^{1/r}\nonumber
 \\
 &\quad\times  \bigg(\int_\mathbb X  \bigg(\int_{B(a,|x|_a )} f(y)dy \bigg)^p V^{-p}(x) w(x)dx \bigg)^{1/p}.\nonumber
  \end{align}
Using $ \int_s^\infty \tilde V^{-p}(t) V_1(t)dt = (p'/p) \tilde V^{1-p}(s) $  in the first factor and applying Proposition \ref{proposition2} to the second factor, we reach the inequality
{\small \begin{align}
 &
  \bigg(\int_\mathbb X \bigg(\int_{B(a,|x|_a )} f(y)dy \bigg)^q u(x)dx \bigg)^{1/q}\nonumber
  \\
  & \le (r/p)^{1/r} (p/p')^{1/q} p' \bigg(\int_0^\infty \bigg( \int_\rho^\infty U_1(t) dt\bigg)^{r/q}  \tilde{V}(\rho)^{(p-1)r/q} \tilde V^{-p}(\rho)V_1(\rho)d\rho \bigg)^{1/r} \bigg(\int_\mathbb X f^p(x) v(x)dx \bigg)^{1/p}\nonumber
  \\
  & = (r/p)^{1/r} (p/p')^{1/q} p' \bigg(\int_0^\infty \bigg( \int_\rho^\infty U_1(t) dt \bigg)^{r/q} \bigg(\int_0^\rho V_1(t) dt \bigg)^{r/q'} V_1(\rho)d\rho \bigg)^{1/r} \bigg(\int_\mathbb X f^p(x) v(x)dx \bigg)^{1/p}\nonumber
    \\
    & = (r/p)^{1/r} (p/p')^{1/q} p' %(p'/q)^{1/r} 
    \mathcal A_1  \bigg(\int_\mathbb X f^p(x) v(x)dx \bigg)^{1/p}\nonumber
    \\
  & = (r/p)^{1/r} (p/p')^{1/q} p' (p'/q)^{1/r} \mathcal A_2  \bigg(\int_\mathbb X f^p(x) v(x)dx \bigg)^{1/p}\nonumber
  \\
  & =(r/q)^{1/r} (p')^{1/p'} p^{1/p}  \mathcal A_2  \bigg(\int_\mathbb X f^p(x) v(x)dx \bigg)^{1/p}.\nonumber
     \end{align}
    }
     In the fourth last equality, we used $\frac{r(p-1)}{q}-p=r\bigg(\frac{p-1}{q}-\frac{p}{r}\bigg)=r\bigg(\frac{p-1}{q}-\frac{p(p-q)}{pq}\bigg)=\frac{r}{q'}$ and in the second last equality, we used Lemma \ref{lemma1}.\\
     
  To establish sufficiency for general $w,$ we fix positive functions $u$ and $w$. If $w=0$ almost everywhere on some ball $B(a,\vert x \vert_a)$ then translating $u$, $w$ on the left will reduce the problem to the one in which this does not occur. (If $w=0$  almost everywhere on $ \mathbb X$, sufficiency holds trivially). We therefore assume that $0<\displaystyle \int_{B(a,|x|_a )} w(y)dy,$ for all $x \neq a $. For each $n>0,$ set $u_n=u \chi_{(B(a,n))}$ and $w_n=\min(w,n)+\chi_{( \mathbb X \backslash{B(a,n )})}$. Then $w_n$ clearly satisfies \eqref{EQ:Condn}, so from previous arguments we have
  \begin{align}  
  & \bigg(\int_\mathbb X \bigg(\int_{B(a,|x|_a )} f(y)dy \bigg)^q u_n(x)dx \bigg)^{1/q}\nonumber
  \\
  & \le c \bigg(\int_0^\infty \bigg( \int_\rho^\infty \tilde u_n(t)dt \bigg)^{r/p}  \bigg(\int_0^\rho \tilde w_n(t)dt \bigg)^{r/p'} \tilde u_n(\rho)d\rho \bigg)^{1/r} \bigg( \int_{\mathbb X}  f^p(y)  w_n^{1-p}(y)dy \bigg)^{1/p},\nonumber
    \end{align}
    for all $f\ge0$. Here $c=(r/q)^{1/r} (p')^{1/p'} p^{1/p}$. If we take $f = g \min(w,n)^{1/p'} \chi_{(B(a,n) )}$ and use the definitions of $u_n$ and $w_n,$ the inequality becomes 
\begin{align}
&\bigg(\int_{B(a,n)}   \bigg(\int_{B(a,\vert x \vert_a)}  {g}(y) \min( w,n)^{1/p'}dy \bigg)^q  u (x)dx \bigg)^{1/q}\le c \bigg(\int_0^n \bigg(\int_\rho^n \tilde u(t) dt\bigg)^{r/p}\nonumber
\\
& \quad\times  \bigg(\int_0^\rho \min(\tilde w,n)\bigg)^{r/p'} \tilde u(\rho)d\rho \bigg)^{1/r} \bigg(\int_ {\mathbb X}  g^p(y)dy \bigg)^{1/p},\nonumber 
\end{align}   
for all non-negative $g$. We let $n \rightarrow \infty$, apply the Monotone Convergence Theorem and substitute $fw^{-1/p'}$ for $g$ to get the desired inequality and complete the proof.
\end{proof}

  \section{Applications and examples}
 
 In this section we present several examples of applications of our results to characterise the weights $u$ and $v$ in several settings: homogeneous groups, hyperbolic spaces, and more general Cartan-Hadamard manifolds.
  
 \subsection{Homogeneous groups}  Let $\X=\mathbb G$ be a homogeneous group in the sense of Folland and Stein \cite{FS-Hardy}, see also an up-to-date exposition in the open access books \cite{FR} and \cite{RS-book}.
Here condition \eqref{EQ:polar} is always satisfied with function $ \lambda (r,\omega)= {r}^{Q-1}$, with $Q$ being the homogeneous dimension of the group. 

 %Let $\mathbb G$ be a homogeneous group of homogeneous dimension $Q$, equipped with a quasi-norm $|\cdot|$.
%For the general description of the setup of homogeneous groups we refer to \cite{FS-Hardy} or \cite{FR}. Particular example of homogeneous groups are the Euclidean space $\Rn$ (in which case $Q=n$), the Heisenberg group, as well as general stratified groups (homogeneous Carnot groups) and graded groups.

Without loss of generality, let us fix $a=0$ to be the identity element of the group $\mathbb G$. To simplify the notation further, we denote $\vert x \vert_a$ by $\vert x \vert$. We note that this is consistent with the notation for the quasi-norm $|\cdot|$ on a homogeneous group $\mathbb G$.

Let us consider an example  of the power weights
$$u(x) =
\left\{
	\begin{array}{ll}
		{\vert x \vert}^{\alpha_1}  & \mbox{if }  \vert x \vert < 1, \\
		{\vert x \vert}^{\alpha_2} & \mbox{if }  \vert x \vert \geq 1,
	\end{array}
\right.
\qquad v(x)= {\vert x \vert }^\beta.$$
Then by Theorem \ref{Theorem1} the inequality 
$$
\bigg(\int_\mathbb G \bigg( \int_{B(a,{\vert x \vert}_a)} \vert f(y) \vert dy \bigg)^q u(x) dx\bigg)^\frac{1}{q} \le C\bigg( \int_\mathbb G {\vert f(y) \vert}^p v(x) dx \bigg)^{\frac{1}{p}}
$$
 holds for $0<q<p,$ $1<p <\infty$, if and only if 
 \begin{align}
 &
 \mathcal A_2^r =\bigg(\int_\mathbb G \bigg(\int_{\mathbb G\backslash{B(a,|x|_a )}} u(y) dy\bigg)^{r/p}  \bigg( \int_{B(a,|x|_a  )}v^{1-p'}(y)dy\bigg)^{r/p'}u(x)dx\bigg)\nonumber
 \\
 & \approx \int_0^1 \bigg(\int_t^1 \rho^{\alpha_1+Q-1} d\rho+ \int_1^\infty \rho^{\alpha_2+Q-1} d\rho \bigg)^{r/p} \bigg(\int_0^t \rho^{\beta(1-p')+Q-1} d\rho \bigg)^{r/p'} t^{\alpha_1+Q-1}dt\nonumber 
 \\
& + \int_1^\infty \bigg(\int_t^\infty \rho^{\alpha_2+Q-1} d\rho \bigg)^{r/p} \bigg(\int_0^t \rho^{\beta(1-p')+Q-1} d\rho \bigg)^{r/p'} t^{\alpha_2+Q-1}dt <\infty. \nonumber
 \end{align}

Let us consider
\begin{align}
&
\int_0^1 \bigg(\int_t^1 \rho^{\alpha_1+Q-1} d\rho+ \int_1^\infty \rho^{\alpha_2+Q-1} d\rho \bigg)^{r/p} \bigg(\int_0^t \rho^{\beta(1-p')+Q-1} d\rho \bigg)^{r/p'} t^{\alpha_1+Q-1}dt\nonumber
\\
&=\int_0^1 \bigg(1/(\alpha_1+Q)- {t}^{\alpha_1+Q}/(\alpha_1+Q)-1/(\alpha_2+Q) \bigg)^{r/p} \bigg(t^{(\beta(1-p')+Q)}/(\beta(1-p')+Q) \bigg)^{r/p'}\nonumber
\\
& \quad\times t^{\alpha_1+Q-1}dt,\nonumber
\end{align}
which is finite for 
$$\alpha_2+Q<0, \beta(1-p')+Q>0, (\alpha_1+Q){r/p}+(\beta(1-p')+Q){r/p'}+\alpha_1+Q>0,$$
which means  
$$\alpha_2+Q<0, \beta(1-p')+Q>0, (\alpha_1+Q){r/q}+(\beta(1-p')+Q){r/p'}>0,$$ 
since we have  $r/p+1=r(1/p+1/r)=r(1/p+1/q-1/p)=r/q$.
\\

Now, consider the other part
\begin{align}
&
\int_1^\infty \bigg(\int_t^\infty \rho^{\alpha_2+Q-1} d\rho \bigg)^{r/p} \bigg(\int_0^t \rho^{\beta(1-p')+Q-1} d\rho \bigg)^{r/p'} t^{\alpha_2+Q-1}dt\nonumber
\\
&= \int_1^\infty  \{(-t)^{(\alpha_2+Q)}/(\alpha_2+Q)\}^{r/p} \{ t^{(\beta(1-p')+Q)}/(\beta(1-p')+Q) \}^{r/p'} t^{\alpha_2+Q-1} dt, \nonumber
\end{align}
which is finite for 
$$\alpha_2+Q<0, \beta(1-p')+Q>0, (\alpha_2+Q){r/p}+(\beta(1-p')+Q){r/p'}+\alpha_2+Q<0,$$
or for 
$$\alpha_2+Q<0,\beta(1-p')+Q>0, (\alpha_2+Q){r/q}+(\beta(1-p')+Q){r/p'}<0.$$

Summarising that we get

  \begin{cor}\label{COR:hom}
Let $\mathbb G$ be a homogeneous group of homogeneous dimension $Q$, an we equip it with a quasi-norm $|\cdot|$. Let  $0<q<p,$ $1<p <\infty,$ $1/r=1/q-1/p,$ and let $\alpha_1, \alpha_2, \beta \in \mathbb R$. Assume that $\alpha_1+Q\not=0$.
Let 
\begin{equation}\label{EQ:uv}
u(x) =
\left\{
	\begin{array}{ll}
		{\vert x \vert}^{\alpha_1}  & \mbox{if } \vert x \vert < 1, \\
		{\vert x \vert}^{\alpha_2} & \mbox{if }  \vert x \vert \geq 1,
	\end{array}
\right.
\qquad
 v(x)= {\vert x \vert }^\beta.
 \end{equation}
 Then the inequality
\begin{equation}\label{EQ:Hardy1hg}
\bigg(\int_\mathbb G\bigg(\int_{B(a,\vert x \vert_a)}\vert f(y) \vert dy\bigg)^q u(x) dx\bigg)^\frac{1}{q}\le C\bigg\{\int_{\mathbb G} {\vert f(x) \vert}^p v(x) dx\bigg\}^{\frac1p}
\end{equation}
holds for all measurable functions $f:\mathbb G\to{\mathbb C}$  if and only if 
the parameters satisfy the following conditions:
 $\alpha_2+Q<0$, $\beta(1-p')+Q>0$, $(\alpha_1+Q){r/q}+(\beta(1-p')+Q){r/p'}>0,$ $(\alpha_2+Q){r/q}+(\beta(1-p')+Q){r/p'}<0.$
 \end{cor}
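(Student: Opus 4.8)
The plan is to invoke Theorem \ref{Theorem1}, which reduces the validity of \eqref{EQ:Hardy1hg} to the single condition $\mathcal{A}_2<\infty$. Since on $\mathbb{G}$ the polar decomposition holds with $\lambda(r,\omega)=r^{Q-1}$ and all the weights involved are radial, the first step is to pass $\mathcal{A}_2^r=\int_{\mathbb{G}}U^{r/p}(x)V^{r/p'}(x)u(x)\,dx$ to polar coordinates; the integration over $\Sigma_r$ contributes only a fixed positive constant, which is absorbed into the relation $\approx$. Because $u$ is defined by two different powers across $|x|=1$, the resulting one-dimensional outer integral splits as $\int_0^1+\int_1^\infty$, and the task becomes to decide finiteness of each radial integral by elementary power-integral bookkeeping.

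Next I would evaluate the inner integrals explicitly. The outer tail $U(x)=\int_{\mathbb{G}\setminus B(a,|x|_a)}u(y)\,dy$ contains the contribution $\int_1^\infty \rho^{\alpha_2+Q-1}\,d\rho$, which is finite precisely when $\alpha_2+Q<0$; likewise the inner ball average $V(x)=\int_{B(a,|x|_a)}v^{1-p'}(y)\,dy\approx\int_0^{|x|_a}\rho^{\beta(1-p')+Q-1}\,d\rho$ is finite near the origin precisely when $\beta(1-p')+Q>0$. These are the first two conditions of the corollary, and they are forced merely for the defining integrals to make sense. Assuming them, $V(t)\approx t^{\beta(1-p')+Q}$, and I would write down the elementary antiderivatives for $U$ on each of the two ranges; here the hypothesis $\alpha_1+Q\neq0$ is exactly what excludes the borderline case $\int_t^1\rho^{-1}\,d\rho=-\ln t$, whose logarithmic growth would need separate treatment.

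With $U$ and $V$ in hand, the core step is the integrability analysis at the two endpoints $t\to0$ and $t\to\infty$. On $(0,1)$ one examines $t\to0$: if $\alpha_1+Q>0$ then $U(t)$ tends to a positive constant and the integrand behaves like $t^{(\beta(1-p')+Q)r/p'+\alpha_1+Q-1}$ with a positive leading exponent, so integrability is automatic; if $\alpha_1+Q<0$ then $U(t)\approx t^{\alpha_1+Q}$ and the integrand behaves like $t^{(\alpha_1+Q)r/p+(\beta(1-p')+Q)r/p'+\alpha_1+Q-1}$, which is integrable near $0$ iff $(\alpha_1+Q)r/p+(\beta(1-p')+Q)r/p'+\alpha_1+Q>0$. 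Using $r/p+1=r/q$, both subcases are subsumed in the single third condition $(\alpha_1+Q)r/q+(\beta(1-p')+Q)r/p'>0$. On $(1,\infty)$ one uses $U(t)\approx t^{\alpha_2+Q}$ and $V(t)\approx t^{\beta(1-p')+Q}$, so the integrand behaves like $t^{(\alpha_2+Q)r/p+(\beta(1-p')+Q)r/p'+\alpha_2+Q-1}$ and is integrable at infinity iff the exponent is below $-1$, i.e. iff $(\alpha_2+Q)r/q+(\beta(1-p')+Q)r/p'<0$, the fourth condition.

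I expect the main obstacle to be the necessity direction rather than sufficiency: one must argue that $\mathcal{A}_2<\infty$ truly forces all four inequalities, not just that the four together are enough. This is where the two-sided nature of power-integral convergence is essential — each of the four conditions controls a distinct endpoint (the tail at infinity through $\alpha_2$, the origin through $\beta$, the behaviour of the $(0,1)$-piece at $0$, and of the $(1,\infty)$-piece at infinity), so a power integral converges \emph{if and only if} its exponent condition holds, making every condition both necessary and sufficient for the corresponding piece. The remaining care is purely in the sign bookkeeping and in the case split on $\alpha_1+Q$. Collecting the two pieces gives that $\mathcal{A}_2<\infty$ if and only if all four stated conditions hold, and Theorem \ref{Theorem1} then converts this into the asserted characterisation of \eqref{EQ:Hardy1hg}.
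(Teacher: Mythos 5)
Your proposal is correct and follows essentially the same route as the paper: invoke Theorem \ref{Theorem1} to reduce to $\mathcal A_2<\infty$, pass to polar coordinates with $\lambda(\rho,\omega)=\rho^{Q-1}$, split the outer integral at $|x|_a=1$, and determine finiteness of each piece by power counting at $t\to0$ and $t\to\infty$, using $r/p+1=r/q$ to consolidate the exponent conditions. Your explicit subcase analysis of $\alpha_1+Q>0$ versus $\alpha_1+Q<0$ (showing both collapse to the single third condition) is a slightly more careful rendering of what the paper does implicitly for the homogeneous group and explicitly only in the hyperbolic example, and your remarks on necessity via the two-sided nature of power-integral convergence match the paper's implicit reasoning.
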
 
 
It is interesting to note that in view of the last two conditions, it is not possible to have Hardy inequality \eqref{EQ:Hardy1hg} with weights $u$ and $v$ in \eqref{EQ:uv} with $\alpha_1=\alpha_2$. This is why we consider different powers $\alpha_1, \alpha_2$ in this example. This is different from the case $p\leq q$ which was considered as an application in \cite{RV-18}.
 
The case $\alpha_1+Q=0$ can be treated in a similar way.

\subsection{Hyperbolic spaces} 

 Let $\mathbb H^n$ denote the hyperbolic space of dimension $n$. In this case  condition \eqref{EQ:polar} is always satisfied with $\lambda(r,\omega)=(\sinh {r})^{n-1}$. Let $a\in \mathbb H^n$, and
 let us fix the weights 
 $$u(x) =
 \left\{
	\begin{array}{ll}
		{(\sinh {\vert x \vert_a})}^{\alpha_1}  & \mbox{if }  \vert x \vert < 1, \\
		{(\sinh {\vert x \vert_a})}^{\alpha_2} & \mbox{if }  \vert x \vert \geq 1,
	\end{array}
\right.,\quad v(x)= (\sinh {\vert x \vert_a})^\beta.$$  
 We note that $\mathcal A_2$ is equivalent to
 \begin{align}
& \mathcal A_2^r \approx \int_0^1  \bigg(\int_{t}^1 (\sinh \rho)^{\alpha_1+n-1}d\rho + \int_{1}^\infty (\sinh \rho)^{\alpha_2+n-1}d\rho \bigg)^{r/p} \bigg( \int_0^{t} (\sinh \rho)^{\beta(1-p')+n-1} d\rho \bigg)^{r/p'}\nonumber
\\
& \quad\times( \sinh {t})^{\alpha_1+n-1} dt + \int_1^\infty\bigg( \int_{t}^\infty (\sinh \rho)^{\alpha_2+n-1}d\rho \bigg)^{r/p}  \bigg( \int_0^{t} (\sinh \rho)^{\beta(1-p')+n-1} d\rho \bigg)^{r/p'}  \nonumber
\\
 &\quad\times( \sinh {t})^{\alpha_2+n-1} dt. \nonumber
\end{align}

In the first part, for ${\alpha_2+n-1}<0$ and $ {\beta(1-p')+n}>0,$
 \begin{align}
&\int_0^1  \bigg(\int_{t}^1 (\sinh \rho)^{\alpha_1+n-1}d\rho + \int_{1}^\infty (\sinh \rho)^{\alpha_2+n-1}d\rho \bigg)^{r/p} \bigg( \int_0^{t} (\sinh \rho)^{\beta(1-p')+n-1} d\rho \bigg)^{r/p'}\nonumber
\\
& \quad\times( \sinh {t})^{\alpha_1+n-1} dt\nonumber
\\
&\approx \int_0^1  \bigg(\int_{t}^1 ( \rho)^{\alpha_1+n-1}d\rho + \int_{1}^\infty (\exp \rho)^{\alpha_2+n-1}d\rho \bigg)^{r/p}\bigg( \int_0^{t} ( \rho)^{\beta(1-p')+n-1} d\rho \bigg)^{r/p'}\nonumber
\\
& \quad\times( {t})^{\alpha_1+n-1} dt\nonumber
\\
&=\int_0^1 \bigg(1/(\alpha_1+n)- {t}^{\alpha_1+n}/(\alpha_1+n)-(\exp 1)^{\alpha_2+n-1}/({\alpha_2+n-1}) \bigg)^{r/p}\nonumber
\\
& \quad\times \bigg( t^{\beta(1-p')+n}/({\beta(1-p')+n}) \bigg)^{r/p'} t^{\alpha_1+n-1} dt,\nonumber
\end{align}
which is finite for, 

(a) $\alpha_1+n \geq 0$, $({\beta(1-p')+n})r/p' +{\alpha_1+n} >0,  $
\\

(b) $\alpha_1+n < 0$, $(\alpha_1+n){r/p}+{(\beta(1-p')+n})r/p' +{\alpha_1+n} >0.  $
\\

However, we can note that in (a), if $\alpha_1+n \geq 0$, then the second condition is automatically satisfied under our assumption $ {\beta(1-p')+n}>0.$
 
In the second part, for  ${\alpha_2+n-1}<0,$
 \begin{align}
 &\int_1^\infty \bigg( \int_{t}^\infty (\sinh \rho)^{\alpha_2+n-1}d\rho \bigg)^{r/p}  \bigg( \int_0^{t} (\sinh \rho)^{\beta(1-p')+n-1} d\rho \bigg)^{r/p'}
( \sinh {t})^{\alpha_2+n-1} dt \nonumber
\\
& \approx  \int_1^\infty \bigg( \int_{t}^\infty (\exp \rho)^{\alpha_2+n-1}d\rho \bigg)^{r/p} \bigg( \int_0^{t} (\exp \rho)^{\beta(1-p')+n-1} d\rho \bigg)^{r/p'}  (\exp {t})^{\alpha_2+n-1} dt \nonumber
\\
&= \int_1^\infty  \bigg( -(\exp t)^{\alpha_2+n-1}/(\alpha_2+n-1) \bigg)^{r/p}\bigg( (\exp t)^{\beta(1-p')+n-1}/(\beta(1-p')+n-1) \bigg)^{r/p'}  \nonumber
\\
& \quad\times ( (\exp {t})^{\alpha_2+n-1} dt, \nonumber
 \end{align}
 which is finite for
 $$
 ({\alpha_2+n-1})r/p+ (\beta(1-p')+n-1)r/p' + \alpha_2+n-1<0,
 $$
which is the same as
$$
 ({\alpha_2+n-1})r/q+ (\beta(1-p')+n-1)r/p' <0.
$$
\begin{cor}\label{COR:hyp}
Let $\mathbb H^n$ be the hyperbolic space, $a\in \mathbb H^n$, and let $|x|_a$ denote the hyperbolic distance from $x$ to $a$. Let $0<q<p,$ $1<p <\infty,$ $1/r=1/q-1/p,$ and let $\alpha_1, \alpha_2, \beta\in\mathbb R$. Assume that $\alpha_1+n\not=0$.
Let 
$$u(x) =
 \left\{
	\begin{array}{ll}
		{(\sinh {\vert x \vert_a})}^{\alpha_1}  & \mbox{if }  \vert x \vert < 1, \\
		{(\sinh {\vert x \vert_a})}^{\alpha_2} & \mbox{if }  \vert x \vert \geq 1,
	\end{array}
\right.
\qquad
 v(x)= (\sinh {\vert x \vert_a})^\beta.$$
 Then the inequality
\begin{equation*}\label{EQ:Hardy1hyp}
\bigg(\int_{\mathbb H^n}\bigg(\int_{B(a,\vert x \vert_a)}\vert f(y) \vert dy\bigg)^q u(x) dx\bigg)^\frac{1}{q}\le C\bigg\{\int_{\mathbb H^n} {\vert f(x) \vert}^p v(x) dx\bigg\}^{\frac1p}
\end{equation*}
holds for all measurable functions $f:\mathbb H^n\to{\mathbb C}$  if and only if 
the parameters satisfy the following conditions:
${\alpha_2+n-1}<0,$ $ {\beta(1-p')+n}>0,$ $ (\alpha_1+n){r/q}+(\beta(1-p')+n){r/p'}>0,$ $({\alpha_2+n-1})r/q+ (\beta(1-p')+n-1)r/p' <0.$ 
\end{cor}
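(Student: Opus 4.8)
The plan is to derive the corollary directly from Theorem \ref{Theorem1}, for which it suffices to determine precisely when the quantity $\mathcal A_2$ is finite for the given weights. First I would invoke the polar decomposition \eqref{EQ:polar} on $\mathbb H^n$ with $\lambda(r,\omega)=(\sinh r)^{n-1}$, so that $\mathcal A_2^r$ becomes the iterated radial integral displayed just before the statement. Since $u$ is defined piecewise in $|x|_a<1$ and $|x|_a\ge 1$, I would split the outer $t$-integral at $t=1$, and within the inner integral $\int_{\mathbb H^n\setminus B(a,|x|_a)}u$ I would likewise split the $\rho$-range at $\rho=1$.

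The crucial analytic input is the two-sided asymptotic behaviour of $\sinh$: near the origin one has $\sinh\rho\approx\rho$, while for large $\rho$ one has $\sinh\rho\approx\exp\rho$. Using the first asymptotic, the factor $V(x)=\int_0^{|x|_a}(\sinh\rho)^{\beta(1-p')+n-1}d\rho$ near $t=0$ reduces to a power integral $\int_0^t\rho^{\beta(1-p')+n-1}d\rho$, which is finite near the origin exactly when $\beta(1-p')+n>0$; this yields the second stated condition. Using the second asymptotic, the tail $\int_1^\infty(\sinh\rho)^{\alpha_2+n-1}d\rho$ behaves like $\int_1^\infty e^{(\alpha_2+n-1)\rho}d\rho$ and is finite precisely when $\alpha_2+n-1<0$; this is the first stated condition and must hold for the $u$-integral to be finite at all. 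Note that the exponential growth at infinity is exactly what shifts the threshold to $\alpha_2+n-1<0$ rather than the $\alpha_2+n<0$ one encounters on homogeneous groups.

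With these two convergence requirements in force, what remains is to analyse the convergence of the two outer integrals at their critical endpoints. On $(0,1)$, after substituting the power asymptotics and evaluating the bracketed expressions explicitly, the integrand near $t=0$ is comparable to a power of $t$ whose integrability gives the exponent condition $(\alpha_1+n)r/q+(\beta(1-p')+n)r/p'>0$ (using $r/p+1=r/q$). On $(1,\infty)$ everything lies in the exponential regime; replacing $\sinh$ by $\exp$ throughout, the integrand is comparable to $e^{[(\alpha_2+n-1)r/p+(\beta(1-p')+n-1)r/p'+(\alpha_2+n-1)]t}$, whose integrability at $+\infty$ is equivalent to $(\alpha_2+n-1)r/q+(\beta(1-p')+n-1)r/p'<0$, the fourth condition. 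The hypothesis $\alpha_1+n\ne0$ lets me evaluate the inner power integral $\int_t^1\rho^{\alpha_1+n-1}d\rho$ in closed form and excludes a logarithmic borderline case.

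The step I expect to require the most care is justifying that the asymptotic replacements $\sinh\rho\approx\rho$ and $\sinh\rho\approx\exp\rho$ may be made inside the nested integrals without affecting the finiteness conclusion, and in particular checking that the cross term $\int_t^1(\sinh\rho)^{\alpha_1+n-1}d\rho$ in the first part, which mixes the near-origin power behaviour with the bounded tail $\int_1^\infty(\sinh\rho)^{\alpha_2+n-1}d\rho$, contributes only a bounded, nonvanishing factor near $t=0$, so that the decisive exponent is governed solely by the explicit $t$-powers. Once these comparabilities are established, collecting the four endpoint conditions completes the characterisation.
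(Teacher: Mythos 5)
Your overall strategy coincides with the paper's own proof: apply Theorem \ref{Theorem1}, write $\mathcal A_2^r$ in polar coordinates with $\lambda(\rho,\omega)=(\sinh\rho)^{n-1}$, split the outer and inner integrals at radius $1$, and use $\sinh\rho\approx\rho$ near the origin and $\sinh\rho\approx e^{\rho}$ at infinity to reduce everything to explicit endpoint conditions.

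There is, however, a genuine flaw in the step you yourself single out as the delicate one. You claim that near $t=0$ the bracket $\int_t^1(\sinh\rho)^{\alpha_1+n-1}d\rho+\int_1^\infty(\sinh\rho)^{\alpha_2+n-1}d\rho$ ``contributes only a bounded, nonvanishing factor'', so that the decisive exponent is governed solely by the remaining $t$-powers. This is true only when $\alpha_1+n>0$. When $\alpha_1+n<0$ (which the corollary permits, provided $(\alpha_1+n)r/q+(\beta(1-p')+n)r/p'>0$), one has $\int_t^1\rho^{\alpha_1+n-1}d\rho=(1-t^{\alpha_1+n})/(\alpha_1+n)\approx t^{\alpha_1+n}/|\alpha_1+n|\to\infty$ as $t\to 0$, and raising the bracket to the power $r/p$ contributes an extra factor $t^{(\alpha_1+n)r/p}$ to the integrand. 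That contribution is exactly what upgrades the (would-be) condition $(\beta(1-p')+n)r/p'+\alpha_1+n>0$ to the correct and strictly stronger condition $(\alpha_1+n)r/q+(\beta(1-p')+n)r/p'>0$; the two differ by $(\alpha_1+n)r/p$, and since $r/q>1$ one can choose $\beta$ with $(\beta(1-p')+n)r/p'$ strictly between $|\alpha_1+n|$ and $|\alpha_1+n|\,r/q$, so that your bounded-factor reasoning certifies finiteness of $\mathcal A_2$ for parameters where it is in fact infinite. The paper avoids this by splitting into the two cases (a) $\alpha_1+n\ge 0$, where the bracket is indeed bounded and the stated condition is automatic given $\beta(1-p')+n>0$, and (b) $\alpha_1+n<0$, where the divergent term $t^{\alpha_1+n}$ dominates and produces the $r/q$ coefficient. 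Note also that your earlier paragraph, which derives the condition ``using $r/p+1=r/q$'', implicitly performs the case-(b) computation, so it contradicts your final paragraph; only the explicit case analysis on the sign of $\alpha_1+n$ reconciles the two and completes the proof.
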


%the parameters satisfy either of the following conditions:
%\begin{itemize}
%\item[(A)] for $\alpha_1+n \ge 0 $, if and only if ${\alpha_2+n-1}<0,$ $ {\beta(1-p')+n}>0,$ $(\beta(1-p')+n){r/p'}+\alpha_1+n>0,$ $({\alpha_2+n-1})r/q+ (\beta(1-p')+n-1)r/p' <0$;
%\item[(B)] for $\alpha_1+n <0 $,  if and only if ${\alpha_2+n-1}<0,$ $ {\beta(1-p')+n}>0,$ $ (\alpha_1+n){r/q}+(\beta(1-p')+n){r/p'}>0,$ $({\alpha_2+n-1})r/q+ (\beta(1-p')+n-1)r/p' <0.$  \end{itemize} \end{cor}

\subsection{Cartan-Hadamard manifolds} \label{SEC:CH}
Let $(M,g)$ be a Cartan-Hadamard manifold. This means that $M$ is 
a complete and simply connected Riemannian manifold with non-positive sectional curvature, that is, the sectional curvature of $M$ satisfies $K_M\le 0$ along each (plane) section at each point of $M$. Then condition \eqref{EQ:polar} is automatically satisfied by taking $\lambda(\rho,\omega)= J(\rho,\omega) \rho^{n-1}$, where $J(\rho,\omega)$ is the density function on $M$, see e.g. \cite{DV1}, Helgason \cite{DV3}.  

Let us fix a point $a\in M$ and denote by 
$\rho(x)=d(x,a)$ the geodesic distance from $x$ to $a$ on $M$. The exponential map ${\rm exp}_a :T_a M \to  M$ is a diffeomorphism, see e.g. Helgason \cite{DV3}.  Let us assume that the sectional curvature $K_M$ is constant, in which case it is known that the function $J(t,\omega)$ depends only on $t$. More precisely, let us denote $K_M=-b$ for $b\ge0$. Then we have
$J(t,\omega)= 1$ if $b=0$, and $J(t,\omega)=(\frac{\sinh \sqrt{b}t}{\sqrt{b}t})^{n-1}$ for $b>0$,
see e.g. \cite {DV5}.
 In the case $b=0$, then let us take the weights 
 $$u(x) =
 \left\{
	\begin{array}{ll}
		{(\sinh {\vert x \vert_a})}^{\alpha_1}  & \mbox{if }  \vert x \vert < 1, \\
		{(\sinh {\vert x \vert_a})}^{\alpha_2} & \mbox{if }  \vert x \vert \geq 1,
	\end{array}
\right. \qquad
v(x)= (\sinh {\vert x \vert_a})^\beta,$$ 
then   inequality \eqref{EQ:Hardy1} holds for $0<q<p$, $1<p <\infty,$ $1/r=1/q-1/p,$ if and only if\\
$ \mathcal A_2 \approx \bigg(\displaystyle\int_0^1  \bigg( \int_t^1 \rho^{\alpha_1+n-1} d\rho +\int_1^\infty \rho^{\alpha_2+n-1} d\rho  \bigg)^{r/p} \bigg( \int_0^t \rho^{\beta(1-p')+n-1} d\rho \bigg)^{r/p'}  t^{\alpha_1+n-1}dt + \displaystyle+ \int_1^\infty  \bigg( \int_t^\infty \rho^{\alpha_2+n-1} d\rho   \bigg)^{r/p} \bigg( \int_0^t \rho^{\beta(1-p')+n-1} d\rho \bigg)^{r/p'}  t^{\alpha_2+n-1}dt\bigg)^{1/r}<\infty,\nonumber
$\\
which is finite if and only if conditions of Corollary \ref{COR:hom} hold with $Q=n$ (which is natural since the curvature is zero).
  \\
When $b>0$, let 
$$u(x) =
 \left\{
	\begin{array}{ll}
		{(\sinh \sqrt{b}{\vert x \vert_a})}^{\alpha_1}  & \mbox{if }  \vert x \vert < 1, \\
		{(\sinh \sqrt{b}{\vert x \vert_a})}^{\alpha_2} & \mbox{if }  \vert x \vert \geq 1,
	\end{array}
\right.
\qquad
v(x)=(\sinh \sqrt b{\vert x \vert_a})^\beta.$$
Then   inequality \eqref{EQ:Hardy1} holds for  $0<q<p$, $1<p <\infty,$ $1/r=1/q-1/p$, if and only if 
  $\mathcal A_2$ is finite. We have
 {\small 
 \begin{align}
& \mathcal A_2 \approx \bigg(\int_0^1  \bigg(\int_t^1 (\sinh \sqrt b \rho)^{\alpha_1} {(\frac{\sinh \sqrt b \rho}{{\sqrt b} \rho})}^{n-1} {\rho}^{n-1}d\rho + \int_1^\infty (\sinh \sqrt b \rho)^{\alpha_2} {(\frac{\sinh \sqrt b \rho}{{\sqrt b} \rho})}^{n-1} {\rho}^{n-1}d\rho \bigg)^{r/p}\nonumber
\\
&\quad\times  \bigg(\displaystyle \int_0^{t} (\sinh \sqrt b \rho)^{\beta(1-p')} {(\frac{\sinh \sqrt b \rho}{{\sqrt b} \rho})}^{n-1} {\rho}^{n-1}d\rho \bigg)^{r/p'}\nonumber
\\
& \quad\times ( \sinh \sqrt b {t})^{\alpha_1} {(\frac{\sinh \sqrt b t}{{\sqrt b} t})}^{n-1} {t}^{n-1}dt +\int_1^\infty  \bigg(\int_t^\infty (\sinh \sqrt b \rho)^{\alpha_2} {(\frac{\sinh \sqrt b \rho}{{\sqrt b} \rho})}^{n-1} {\rho}^{n-1}d\rho \bigg)^{r/p}\nonumber
\\
&\quad\times\bigg(\displaystyle \int_0^{t} (\sinh \sqrt b \rho)^{\beta(1-p')} {(\frac{\sinh \sqrt b \rho}{{\sqrt b} \rho})}^{n-1} {\rho}^{n-1}d\rho \bigg)^{r/p'}\nonumber
\\
& \quad\times ( \sinh \sqrt b {t})^{\alpha_2} {(\frac{\sinh \sqrt b t}{{\sqrt b} t})}^{n-1} {t}^{n-1}dt  \bigg)^{1/r}\nonumber
\\
&  \approx \bigg(\int_0^1  \bigg(\int_{t}^1 (\sinh \sqrt b \rho)^{\alpha_1+n-1} d\rho+\int_1^\infty (\sinh \sqrt b \rho)^{\alpha_2+n-1} d\rho \bigg)^{r/p}\nonumber
\\
&\quad\times  \bigg(\displaystyle \int_0^{t} (\sinh \sqrt b \rho)^{\beta(1-p')+n-1} d\rho \bigg)^{r/p'}( \sinh \sqrt b {t})^{\alpha_1+n-1} dt + \int_1^\infty  \bigg(\int_{t}^\infty (\sinh \sqrt b \rho)^{\alpha_2+n-1} d\rho \bigg)^{r/p
}\nonumber
\\
&\quad\times  \bigg(\displaystyle \int_0^{t} (\sinh \sqrt b \rho)^{\beta(1-p')+n-1} d\rho \bigg)^{r/p'}( \sinh \sqrt b {t})^{\alpha_2+n-1} dt \bigg)^{1/r},\nonumber
\end{align} }%
which has the same conditions for finiteness as the case of the hyperbolic space in Corollary \ref{COR:hyp}  (which is also natural since it is the negative constant curvature case).

 \end{document}